\newcommand{\pp}{\partial}
\newcommand{\rr}{\mathbb{R}}
\newcommand{\cc}{\mathbb{C}}
\renewcommand{\Re}{\mathrm{Re}}
\newtheorem{theorem}{Theorem}[section]
\newtheorem{lemma}[theorem]{Lemma}
\newtheorem{proposition}[theorem]{Proposition}
\newtheorem{corollary}[theorem]{Corollary}
\newtheorem{problem}[theorem]{Problem}
\theoremstyle{definition}
\newtheorem{definition}[theorem]{Definition}
\theoremstyle{remark}
\newtheorem{remark}[theorem]{Remark}
\crefname{enumi}{}{}
\title{\bf The Poisson kernel and
 the Fourier transform \\ of the  slice monogenic Cauchy kernels}
\author[F. Colombo]{Fabrizio Colombo}
\address{(FC)
Politecnico di Milano\\Dipartimento di Matematica\\Via E. Bonardi, 9\\20133
Milano, Italy}
\email{fabrizio.colombo@polimi.it}
\author[A. De Martino]{Antonino De Martino}
\address{(AD)
Politecnico di Milano\\Dipartimento di Matematica\\Via E. Bonardi, 9
\\20133
Milano, Italy}
\email{antonino.demartino@polimi.it}
\author[T. Qian]{Tao Qian}
\address{(TQ)
Depaerment of Mathematics\\
Faculty of Science and Technology\\
University of Macau, Macau\\
} \email{fsttq@umac.mo}
\author[I. Sabadini]{Irene Sabadini}
\address{(IS)
 Politecnico di Milano\\Dipartimento di Matematica\\Via E. Bonardi, 9\\20133
Milano, Italy
} \email{irene.sabadini@polimi.it}
\begin{document}
\maketitle

\begin{abstract}
The Fueter-Sce-Qian (FSQ for short)  mapping theorem is a two-steps procedure
to extend holomorphic functions of one complex variable
to slice monogenic functions and to monogenic functions.
Using the Cauchy formula of slice monogenic functions the
FSQ-theorem admits an integral representation for $n$ odd.
In this paper we show that the relation
$
\Delta_{n+1}^{(n-1)/2}S_L^{-1}=\mathcal{F}^L_n
$
 between the slice monogenic Cauchy kernel $S_L^{-1}$ and the F-kernel $\mathcal{F}^L_n$, that appear in the integral form of the FSQ-theorem for $n$ odd,
holds also in the case we consider the fractional powers of the Laplace operator $\Delta_{n+1}$ in dimension $n+1$, i.e., for $n$ even. Moreover, this
relation is proven computing explicitly Fourier transform of the kernels $S_L^{-1}$ and $\mathcal{F}^L_n$ as functions of the Poisson kernel.
Similar results hold for the right kernels
$S_R^{-1}$ and of $\mathcal{F}^R_n$.
\end{abstract}

\medskip
\noindent AMS Classification .

\noindent Keywords: Fueter-Sce-Qian mapping theorem, Poisson kernel,
slice monogenic function, Fractional Laplacian, Fourier transform.

\noindent {\em }
\date{today}
\tableofcontents

\section{Introduction}

The Fueter-Sce-Qian (FSQ for short) mapping theorem is one of the deepest results in complex and  hypercomplex analysis because it shows how to extend holomorphic functions
of one complex variable to high dimensions for vector-valued functions.
This theorem is due to R. Fueter \cite{fueter1} for the quaternionic setting, it was generalized by
M. Sce \cite{sce}, to Clifford algebra $\mathbb{R}_n $ for $n$ odd while
the case of even dimension was proved by T. Qian in \cite{qian} (see also the recent monograph \cite{booktao}). The method of T. Qian requires the use of the Fourier transform in the space of distributions and is deeply different from the method of R. Fueter and M. Sce.
In the literature it is less known that the results of M. Sce, in \cite{sce},
 are written in a setting that
contains, as particular case, the Clifford algebra $\mathbb{R}_n $
of odd dimension, but, for example it also works for the octonions.
 For more details see the recent translation of the work of M. Sce with commentaries
 \cite[chapter 5]{SCEBOOK}.

\medskip
Consider  holomorphic functions of one complex variable
$f:\Omega \subseteq \mathbb{C} \to \mathbb{C}$ (denoted the set by $\mathcal{O}(\Omega)$).
It is well known that the way to extend the function theory of one complex variable to several complex variables is to
 consider the systems of Cauchy-Riemann equations,
 for
 $$
 f:\Pi\subseteq \mathbb{C}^n \to \mathbb{C},
 $$
 where $\Pi$ is an open set.
To explain the FSQ mapping theorem we need some preliminary notation
for the Clifford algebra setting.

\medskip
Let $\rr_n$ be the real Clifford algebra over $n$ imaginary units $e_1,\ldots ,e_n$
satisfying the relations $e_\ell e_m+e_me_\ell=0$,\  $\ell\not= m$, $e_\ell^2=-1.$
 An element in the Clifford algebra will be denoted by $\sum_A e_Ax_A$ where
$A=\{ \ell_1\ldots \ell_r\}\in \mathcal{P}\{1,2,\ldots, n\},\ \  \ell_1<\ldots <\ell_r$
 is a multi-index
and $e_A=e_{\ell_1} e_{\ell_2}\ldots e_{\ell_r}$, $e_\emptyset =1$.
An element $(x_0,x_1,\ldots,x_n)\in \mathbb{R}^{n+1}$  will be identified with the element
$
 x=x_0+\underline{x}=x_0+ \sum_{\ell=1}^nx_\ell e_\ell\in\mathbb{R}_n
$
called paravector and the real part $x_0$ of $x$ will also be denoted by $\Re(x)$.
The norm of $x\in\mathbb{R}^{n+1}$ is defined as $|x|^2=x_0^2+x_1^2+\ldots +x_n^2$.
 The conjugate of $x$ is defined by
$
\bar x=x_0-\underline x=x_0- \sum_{\ell=1}^nx_\ell e_\ell.
$
We denote by $\mathbb{S}$ the sphere
$$
\mathbb{S}=\{ \underline{x}=e_1x_1+\ldots +e_nx_n\ | \  x_1^2+\ldots +x_n^2=1\};
$$
for $I\in\mathbb{S}$ we obviously have $I^2=-1$.
Given an element $x=x_0+\underline{x}\in\rr^{n+1}$ let us set
$
I_x=\underline{x}/|\underline{x}|$ if $\underline{x}\not=0,
$
 and given an element $x\in\rr^{n+1}$, the set
$$
[x]:=\{y\in\rr^{n+1}\ :\ y=x_0+I |\underline{x}|, \ I\in \mathbb{S}\}
$$
is an $(n-1)$-dimensional sphere in $\mathbb{R}^{n+1}$.
The vector space $\mathbb{R}+I\mathbb{R}$ passing through $1$ and
$I\in \mathbb{S}$ will be denoted by $\mathbb{C}_I$ and
an element belonging to $\mathbb{C}_I$ will be indicated by $u+Iv$, for $u$, $v\in \mathbb{R}$.
With an abuse of notation we will write $x\in\mathbb{R}^{n+1}$.
Thus, if $U\subseteq\mathbb{R}^{n+1}$ is an open set,
a function $f:\ U\subseteq \mathbb{R}^{n+1}\to\mathbb{R}_n$ can be interpreted as
a function of the paravector $x$.

\medskip
We can now summarize the FSQ mapping theorem highlighting the two steps of the extension procedure as follows.
In the first step we obtain
slice monogenic functions \cite{GLOBAL,SOREN1,cstrends,Entirebook,CSSf,CSSd,CSSe,Cnudde,REN4},
see the book \cite{NONCOMMBOOK},
while in the second step we get the classical monogenic functions \cite{bds,BLU,DELSOSOU,gimu,Gurlebeck:2008}.
Let $\tilde{f}(z) = f_0(u,v)+i f_1(u,v)$, where $i=\sqrt{-1}$, be a  holomorphic function
 defined in a symmetric domain with respect to the real axis $D\subseteq\mathbb C$  and let
$$
\Omega _D= \{x =x_0+\underline{x}\ \ :\ \  (x_0, |\underline{x}|) \in D\}
$$
be the open set, induced by $D$, in $\mathbb{R}^{n+1}$.
Moreover, we assume that
 $$
f_0(u,-v)= f_0(u,v) \ \ \ {\rm and} \ \ \  f_1(u,-v)= -f_1(u,v)
$$
 namely $f_0$ and $f_1$ are, respectively, even and odd functions in the variable $v$.
 Additionally the pair $(f_0,f_1)$ satisfies the Cauchy-Riemann system. Then the FSQ extension procedure is as follows.

Step (I). The linear operator $T_{FSQ1}$ defined as
 $$
T_{FSQ1}(\tilde{f}(z)):=
f_0(x_0,|\underline{x}|)+\frac{\underline{x}}{|\underline{x}|}f_1(x_0,|\underline{x}|)\ \ \ {\rm on} \ \ \Omega_D
$$
extends the holomorphic function $\tilde{f}(z)$ to the slice monogenic function
$$
f(x):=f_0(x_0,|\underline{x}|)+\frac{\underline{x}}{|\underline{x}|}f_1(x_0,|\underline{x}|).
$$

Step (II). Consider the linear operator $T_{FSQ2}:=\Delta_{n+1}^{\frac{n-1}{2}}$ where $\Delta_{n+1}$ is the Laplace operator in $n+1$ dimensions, i.e.,
$\Delta_{n+1}=\pp^2_{x_0}+\sum_{j=1}^n\pp^2_{x_j}$.
Then, $T_{FSQ2}$
 maps the slice monogenic function $f(x)$ in the monogenic function
 $$
\breve{f}(x):=\textcolor{black}{T_{FSQ2}} \Big(\textcolor{black}{f_0(x_0,|\underline{x}|)
+\frac{\underline{x}}{|\underline{x}|}f_1(x_0,|\underline{x}|)}\Big),
$$
i.e., $\breve{f}(x)$
is in the kernel of the Dirac operator $D$, i.e.
$$
D\breve{f}(x):=\pp_{x_0}\breve{f}(x)+\sum_{i=1}^n e_i\pp_{x_i}\breve{f}(x)=0
\ \ \ {\rm on} \ \ \Omega_D.
$$
We point out that in the extension procedure the operator $T_{FSQ1}$ maps holomorphic functions
into the set of intrinsic slice monogenic functions, denoted by $\mathcal{N}(\Omega_D)$, that is strictly
contained in the set of slice monogenic functions $\mathcal{SM}(U)$.
Similarly, when we apply the operator $T_{FSQ2}$ to the set of slice monogenic functions, not necessarily intrinsic, we obtain a subclass of the monogenic functions that are called
 axially monogenic functions and are denoted by
 $\mathcal{AM}(\Omega_D)$, so we can visualize the FSQ construction  by the diagram:
\begin{equation*}
\begin{CD}
\textcolor{black}{\mathcal{O}(D)}  @>T_{FSQ1}>> \textcolor{black}{\mathcal{N}(\Omega_D)}  @>\ \   T_{FSQ2}=\Delta_{n+1}^{(n-1)/2} >>\textcolor{black}{\mathcal{AM}(\Omega_D)},
\end{CD}
\end{equation*}
where $T_{FSQ1}$ denotes the first linear operator and $T_{FSQ2}$ the second one.
As it is clear $\Delta_{n+1}^{\frac{n-1}{2}}$  is a fractional operator for $n$ even.

\medskip
The FSQ-mapping theorem and its generalizations can be found in \cite{E1,E2,E3,kqs,P1,P2,sommen1},
more recently there has been an intensive research in the direction of the
inverse FSQ-mapping theorem, which has been investigated in the papers \cite{A,CoSaSo1,CoSaSo2,C,CCC,D}.
Using the Radon and dual Radon transform, see \cite{radon},
it is possible to find a different method, with respect to the FSQ-theorem, to relate slice monogenic functions and the monogenic functions.

\medskip
The FSQ mapping theorem in integral form, introduced in \cite{CoSaSo},
 is associated with the second step of the FSQ extension procedure.
In fact, the main idea is to apply the linear operator
$$
T_{FSQ2}=\Delta_{n+1}^{\frac{n-1}{2}}
$$
to the Cauchy kernel
$$
S_L^{-1}(s,x):=(s-\bar x)(s^2-2{\rm Re}(x)s+|x|^2)^{-1},
$$
of left slice monogenic functions when $n$ is odd, similarly we proceed for right slice monogenic functions.
For odd dimension applying the operator $\Delta_{n+1}^{\frac{n-1}{2}}$, in the variables in $x$, to the function $S_L^{-1}(s,x)$ we have obtained a very simple expression given by
\[
\begin{split}
\Delta_{n+1}^{\frac{n-1}{2}}S_L^{-1}(s,x)
=\gamma_n(s-\bar x)(s^2-2{\rm Re}(x)s +|x|^2)^{-\frac{n+1}{2}},
\end{split}
\]
where $\gamma_n$ are
\begin{equation}\label{gammn}
\gamma_n:= (-1)^{\frac{n-1}{2}} 2^{n-1} \left[\Gamma\left( \frac{n+1}{2} \right) \right]^2
\end{equation}
which can be used to obtain the Fueter-Sce mapping theorem in integral form.
Precisely, let $f$ be a slice monogenic function defined in an open set that contains $\overline{U}$, where $U$ is a
bounded  axially symmetric open set. Suppose that the boundary of $U\cap \mathbb{C}_I$  consists of a finite number of rectifiable Jordan curves for any $I\in\mathbb{S}$.
Then, if $x\in U$, the monogenic function $\breve{f}(x)$, given by
\begin{equation}\label{FSQdiffer}
\breve{f}(x)=\Delta_{n+1}^{\frac{n-1}{2}}f(x)
\end{equation}
 admits the integral representation
\begin{equation}\label{FueterGG}
 \breve{f}(x)=\frac{1}{2 \pi}\int_{\pp (U\cap \mathbb{C}_I)} \mathcal{F}_L(s,x)ds_I f(s),\ \ \ ds_I=ds/ I,
\end{equation}
where
$$
\mathcal{F}_L(s,x):=\gamma_n(s-\bar x)(s^2-2{\rm Re}(x)s +|x|^2)^{-\frac{n+1}{2}}
$$
is called the left $F$-kernel,
and the integral depends neither on $U$ nor on the  imaginary unit $I\in\mathbb{S}$.

The main problems studied in this paper can be formulated as follows.

\begin{problem}

(A) Determine the type of hyperholomorphicity of the map
 $$(s,x)\mapsto (s-\bar x)(s^2-2{\rm Re}(x)s +|x|^2)^{-h},
  $$
  for $h\in \mathbb{R}$, with respect to $s$ and $x$ for $s\not\in[x]$.

(B) Compute explicitly the Fourier transform of the slice monogenic Cauchy kernels and of the
$F_n$-kernels as functions of the Poisson kernel.

(C) Show that the relation
 $\Delta_{n+1}^{\frac{n-1}{2}}S_L^{-1}(s,x)
=\gamma_n(s-\bar x)(s^2-2{\rm Re}(x)s +|x|^2)^{-\frac{n+1}{2}}$ is true for all dimensions $n$ replacing suitably the constants $\gamma_n$.
\end{problem}

\medskip
The main results of this paper can now be summarized in the following steps.

(I)
It is a remarkable fact that  the function
$$
(s,x)\mapsto (s-\bar x)(s^2-2{\rm Re}(x)s +|x|^2)^{-h}
$$
is slice monogenic in $s$ for any $h\in\mathbb N$ but is monogenic in $x$ if and only if  $h=(n+1)/2$,
 namely if and only if $h$ is relate with the Sce's exponent also for $n$ odd.
 In Theorem \ref{sceqianexponen} we have shown that this
 result remains true also in the case of even dimension, that is when we have the fractional powers.

(II) The  Fourier transform, denoted by the symbol $\mathrm{F}$, of $S_L^{-1}(s, x)$ is
$$
\mathrm{F}[S_L^{-1}(s, \cdot)](\xi)
= c_n\  \frac{\bar{\xi}}{(\xi_0^2+| \underline{\xi}|^2)^{\frac{n+1}{2}}} \ e^{-is \xi_0}, \ \ \
\xi=\xi_0+\underline{\xi}\not=0,
$$
where $$c_n:=i 2^n \pi^{\frac{n+1}{2}} \Gamma \left( \frac{n+1}{2} \right).$$

(III)
A second fundamental result is the Fourier transform of the $F_n$-kernels.
We proved that
$$
\mathrm{F}[\mathcal{F}^L_n(s, \cdot)](\xi)
= k_n\  \frac{\bar{\xi}}{\xi_0^2+| \underline{\xi}|^2} \ e^{-is \xi_0},\ \ \ \
\xi_0+\underline{\xi}\not=0,
$$
where
$$
k_n:=i(-1)^{\frac{n-1}{2}}2^n  \pi^{\frac{n+1}{2}} \Gamma \left( \frac{n+1}{2} \right).
$$

(IV) We show that  the relation
$$
\Delta_{n+1}^{\frac{n-1}{2}}S_L^{-1}(s,x)
=\gamma_n(s-\bar x)(s^2-2{\rm Re}(x)s +|x|^2)^{-\frac{n+1}{2}}
$$
holds true also when $n$ is an  even number, using the Fourier transform of the kernels $S_L^{-1}$ and
$\mathcal{F}^L_n$.

\medskip
{\em The plan of the paper}.
The paper contains three sections including the introduction.
 In Section 2 we show the monogenicity of the Fueter-Sce kernel in even dimension.
Section 3 we compute the Fourier transform of the slice monogenic Cauchy kernels.
In Section 4 we compute
the Fourier transform of the $F_n$-kernels.
Finally in Section 5 we show that the relation
$$
\Delta_{n+1}^{(n-1)/2}S_L^{-1}(s,x)=\mathcal{F}_n^L(s,x),\ \ {\rm for}\ \ s\not\in[x]
$$
also holds for the even dimension of the Clifford algebra $\mathbb{R}_n$. The proof is based on the  the Fourier transform.

\section{Monogenicity of the Fueter-Sce kernel in even dimension}\label{preliminary}

In this paper we use the definition of  slice monogenic functions that is the generalization of
slice monogenic functions in the spirit of the FSQ mapping theorem and it is slightly different from the one in \cite{NONCOMMBOOK}.
This definition is the most appropriate for operator theory
and the reason is widely explained in several papers and in the books \cite{FRACTBOOK,6CKG}.
Keeping in mind the notations previously given for
the Clifford algebra $\mathbb{R}_n$ we recall some definitions.
For the missing proofs of the results that we recall see for example \cite{FCAL1,CoSaSo}.

\begin{definition}
Let $U\subseteq \mathbb{R}^{n+1}$.
 We say that $U$ is axially symmetric if $[x]\in U$ for every $x\in U$.
\end{definition}

\begin{definition}[Slice monogenic functions]\label{SHolDef}
 Let $U\subseteq\mathbb{R}^{n+1}$ be an axially symmetric open set and let $\mathcal{U} = \{ (u,v)\in\rr^2: u+ \mathbb{S} v\subset U\}$. A function $f:U\to \mathbb{R}_n$ is called a left
 slice function, if it is of the form
 \[
 f(x) = f_{0}(u,v) + If_{1}(u,v)\qquad \text{for } x = u + I v\in U
 \]
with the two functions $f_{0},f_{1}: \mathcal{U}\to \mathbb{R}_n$ that satisfy the compatibility conditions
\begin{equation}\label{CCondslic}
f_{0}(u,-v) = f_{0}(u,v),\qquad f_{1}(u,-v) = -f_{1}(u,v).
\end{equation}
If in addition $f_{0}$ and $f_{1}$ are $\mathcal C^1$ and satisfy the Cauchy-Riemann equations
\begin{equation}\label{CRR}
\begin{split}
&\partial_u f_{0}(u,v) -\partial_vf_{1}(u,v)=0
\\
&
\partial_v f_{0}(u,v) +\partial_uf_{1}(u,v)=0
\end{split}
\end{equation}
 then $f$ is called left slice monogenic.
A function $f:U\to \mathbb{R}_n$ is called a right slice function if it is of the form
\[
f(x) = f_{0}(u,v) + f_{1}(u,v) I\qquad \text{for } x = u+ Iv \in U
\]
with the two functions $f_{0},f_{1}: \mathcal{U}\to \mathbb{R}_n$ that satisfy (\ref{CCondslic}).
If $f_{0}$ and $f_{1}$ are $\mathcal C^1$ and satisfy the Cauchy-Riemann equations
(\ref{CRR})
 then $f$ is called right slice monogenic.

If $f$ is a left (or right) slice function such that $f_{0}$ and $f_{1}$ are real-valued, then $f$ is called intrinsic.

We denote the sets of left, right and intrinsic
 slice monogenic functions on $U$ by $\mathcal{SM}_L(U)$,
$\mathcal{SM}_R(U)$ and $\mathcal{N}(U)$, respectively.

\end{definition}

For slice monogenic functions we have two equivalent ways
to write the Cauchy kernels.

\begin{proposition}\label{secondAA}
If $x, s\in \mathbb{R}^{n+1}$ with $x\not\in [s]$, then
\begin{gather}\label{secondAAEQ}
-(x^2 -2x \Re  (s)+|s|^2)^{-1}(x-\overline s)=(s-\bar q)(s^2-2\Re (x)s+|x|^2)^{-1}
\end{gather}
and
\begin{gather}\label{secondAAEQ1}
 (s^2-2\Re (x)s+|x|^2)^{-1}(s-\bar x)=-(x-\bar s)(x^2-2\Re (s)x+|s|^2)^{-1} .
\end{gather}
\end{proposition}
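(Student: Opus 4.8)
The plan is to prove \eqref{secondAAEQ} by clearing the Clifford-algebra ``denominators'' and reducing it to a polynomial identity, and then to read off \eqref{secondAAEQ1} from \eqref{secondAAEQ} by exchanging the roles of $x$ and $s$. The two structural facts I would use are the standard ones for a paravector $p = p_0 + \underline p \in \mathbb{R}^{n+1}$: namely $p + \bar p = 2\Re(p)$ and $p\bar p = \bar p p = |p|^2$, whence $p$ solves its own characteristic equation $p^2 - 2\Re(p)p + |p|^2 = 0$. Applying this last relation with $p=x$ and with $p=s$ rewrites the two quadratic factors in affine form,
\[
x^2 - 2\Re(s)x + |s|^2 = 2\bigl(\Re(x)-\Re(s)\bigr)x + \bigl(|s|^2 - |x|^2\bigr),
\]
\[
s^2 - 2\Re(x)s + |x|^2 = -\Bigl[\,2\bigl(\Re(x)-\Re(s)\bigr)s + \bigl(|s|^2 - |x|^2\bigr)\,\Bigr],
\]
and a short check shows that each of these paravectors vanishes exactly when $x \in [s]$, equivalently $s \in [x]$. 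Thus, under the hypothesis $x \notin [s]$, both are invertible.

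Having recorded this, I would left-multiply \eqref{secondAAEQ} by $x^2 - 2\Re(s)x + |s|^2$ and right-multiply by $s^2 - 2\Re(x)s + |x|^2$, which turns \eqref{secondAAEQ} into the equivalent polynomial identity
\[
-(x-\bar s)\bigl(s^2 - 2\Re(x)s + |x|^2\bigr) = \bigl(x^2 - 2\Re(s)x + |s|^2\bigr)(s-\bar x).
\]
Then I would expand both sides, either by substituting the affine forms above or directly using $x^2 = 2\Re(x)x - |x|^2$, $s^2 = 2\Re(s)s - |s|^2$, $x\bar x = |x|^2$ and $\bar s s = |s|^2$, and collect the coefficients of $xs$, of $x$, of $s$, of $\bar x$, of $\bar s$ and of the scalars. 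The $xs$-terms on the two sides coincide, and the remaining difference collapses to
\[
|s|^2(x + \bar x) + |x|^2(s + \bar s) - 2\Re(x)|s|^2 - 2\Re(s)|x|^2 = 0
\]
once one uses $x + \bar x = 2\Re(x)$ and $s + \bar s = 2\Re(s)$. This proves \eqref{secondAAEQ}. For \eqref{secondAAEQ1} I would simply swap $x$ and $s$ in \eqref{secondAAEQ}: this produces $-\bigl(s^2 - 2\Re(x)s + |x|^2\bigr)^{-1}(s-\bar x) = (x-\bar s)\bigl(x^2 - 2\Re(s)x + |s|^2\bigr)^{-1}$, which is \eqref{secondAAEQ1} after multiplying through by $-1$; the hypothesis $x\notin[s]$ is symmetric in $x$ and $s$, so nothing else is needed.

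I do not expect a real obstacle here, only careful bookkeeping: since $\mathbb{R}_n$ is noncommutative, when clearing denominators the quadratic factor in $x$ must be moved to the left and the one in $s$ to the right, and throughout the expansion one may use $x\bar x = |x|^2$ and $\bar s s = |s|^2$ but never that $x$ and $s$ commute. The one genuinely useful idea is the elementary remark that a paravector satisfies its own characteristic equation, which converts the quadratic kernels into affine expressions and makes the final cancellation immediate.
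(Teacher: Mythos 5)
Your proof is correct: after checking that the two quadratic paravectors are nonzero (hence invertible) precisely off $[s]=[x]$, clearing the denominators reduces \eqref{secondAAEQ} to the polynomial identity $-(x-\bar s)(s^2-2\Re(x)s+|x|^2)=(x^2-2\Re(s)x+|s|^2)(s-\bar x)$, which collapses exactly as you say via $p^2=2\Re(p)p-|p|^2$ and $p\bar p=|p|^2$, and the $x\leftrightarrow s$ swap for \eqref{secondAAEQ1} is legitimate since the hypothesis is symmetric. The paper itself recalls this proposition without proof (referring to \cite{FCAL1,CoSaSo}), and your direct-verification argument is essentially the standard one given there; note only that $\bar q$ in \eqref{secondAAEQ} is a typo for $\bar x$.
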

So we can give the following definition to distinguish the two representations of the Cauchy kernels.
\begin{definition}\label{FORMSCK}
Let $x,s\in \mathbb{R}^{n+1}$ with $x\not\in [s]$.
\begin{itemize}
\item
We say that  $S_L^{-1}(s,x)$ is written in the form I if
$$
S_L^{-1}(s,x):=-(x^2 -2 \Re  (s) x+|s|^2)^{-1}(x-\overline s).
$$
\item
We say that $S_L^{-1}(s,x)$ is written in the form II if
$$
S_L^{-1}(s,x):=(s-\bar x)(s^2-2\Re (x) s+|x|^2)^{-1}.
$$
\item
We say that  $S_R^{-1}(s,x)$ is written in the form I if
$$
S_R^{-1}(s,x):=-(x-\bar s)(x^2-2\Re (s)x+|s|^2)^{-1} .
$$
\item
We say that $S_R^{-1}(s,x)$ is written in the form II if
$$
S_R^{-1}(s,x):=(s^2-2\Re (x)s+|x|^2)^{-1}(s-\bar x).
$$
\end{itemize}
\end{definition}
\begin{lemma} Let $x,s\in \mathbb{R}^{n+1}$ with $s\notin [x]$.
The left slice monogenic Cauchy kernel $S_L^{-1}(s,x)$ is left slice monogenic in $x$ and right slice monogenic in $s$.
The right slice monogenic Cauchy kernel $S_R^{-1}(s,x)$ is left slice monogenic in $s$ and right slice monogenic in $x$.
\end{lemma}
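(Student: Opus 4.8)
The plan is to verify the statement directly from the two explicit representations of the Cauchy kernels in Definition~\ref{FORMSCK} together with their equivalence in Proposition~\ref{secondAA}: for each variable separately I will exhibit the kernel as a slice function in that variable and then check that its two components satisfy the Cauchy--Riemann system \eqref{CRR}. One first observes that for fixed $s$ the set $\{x\in\mathbb{R}^{n+1}:s\notin[x]\}$ is axially symmetric, and likewise $\{s\in\mathbb{R}^{n+1}:s\notin[x]\}$ for fixed $x$, so the slice terminology makes sense in both variables.

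For the dependence on $x$ I would use form II, $S_L^{-1}(s,x)=(s-\bar x)(s^2-2\Re(x)s+|x|^2)^{-1}$. Fixing $s$, picking $I\in\mathbb{S}$ and writing $x=u+Iv$ on the slice $\mathbb{C}_I$, the second factor becomes the Clifford number $A:=s^2-2us+u^2+v^2=(s-u)^2+v^2$, which does not involve $I$ and which, by a short norm computation ($|A|^2=((\Re s-u)^2-|\underline s|^2+v^2)^2+4(\Re s-u)^2|\underline s|^2$), is invertible exactly when $s\notin[x]$. Since $s-\bar x=(s-u)+Iv$, this already displays
$$
S_L^{-1}(s,x)=(s-u)A^{-1}+I\,vA^{-1}=:g_0(u,v)+I\,g_1(u,v),
$$
with $g_0,g_1$ independent of $I$, and even, respectively odd, in $v$ because $A$ depends on $v$ only through $v^2$; hence the compatibility conditions \eqref{CCondslic} hold. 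It then remains to differentiate: from $\partial_uA=-2(s-u)$, $\partial_vA=2v$ one gets $\partial_uA^{-1}=2(s-u)A^{-2}$ and $\partial_vA^{-1}=-2vA^{-2}$, and a direct computation gives $\partial_ug_0-\partial_vg_1=-2A^{-1}+2A\,A^{-2}=0$ and $\partial_vg_0+\partial_ug_1=0$, which is \eqref{CRR}.

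The dependence on $s$ is handled symmetrically, now starting from form I, $S_L^{-1}(s,x)=-(x^2-2\Re(s)x+|s|^2)^{-1}(x-\bar s)$: fixing $x$ and writing $s=p+Jq$ on $\mathbb{C}_J$, the left factor $C:=x^2-2px+p^2+q^2=(x-p)^2+q^2$ is independent of $J$ and invertible iff $s\notin[x]$, and $x-\bar s=(x-p)+Jq$ yields the slice decomposition
$$
S_L^{-1}(s,x)=-C^{-1}(x-p)+\bigl(-qC^{-1}\bigr)J=:h_0(p,q)+h_1(p,q)\,J,
$$
with $h_0$ even and $h_1$ odd in $q$; the same differentiation, now in $p,q$ and using $\partial_pC=-2(x-p)$, $\partial_qC=2q$, shows that $h_0,h_1$ satisfy \eqref{CRR}. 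Finally, the two assertions for $S_R^{-1}(s,x)=(s^2-2\Re(x)s+|x|^2)^{-1}(s-\bar x)=-(x-\bar s)(x^2-2\Re(s)x+|s|^2)^{-1}$ follow verbatim after interchanging the roles of form I and form II in Definition~\ref{FORMSCK}.

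The one point that genuinely needs care, and where the computation could go wrong, is the non-commutative bookkeeping in the Cauchy--Riemann check: the cancellations work only because the quadratic factor $A$ (respectively $C$) commutes with the linear factor $s-\Re(x)$ (respectively $x-\Re(s)$), which is exactly the content of the rewritings $A=(s-u)^2+v^2$ and $C=(x-p)^2+q^2$; this is what legitimizes $\partial_uA^{-1}=2(s-u)A^{-2}$ and collapses \eqref{CRR} to the scalar identity $A^{-1}=\bigl((s-u)^2+v^2\bigr)A^{-2}$. As an alternative, one could instead expand $S_L^{-1}$ in the locally uniformly convergent series $\sum_{m\ge0}x^{m}s^{-1-m}$ on $\{|x|<|s|\}$ and in the analogous series on $\{|x|>|s|\}$, each summand being manifestly left slice monogenic in $x$ and right slice monogenic in $s$; this settles everything off the sphere $\{|x|=|s|\}$, while the remaining locus $\{|x|=|s|,\ s\notin[x]\}$ is covered by the direct argument above or by an identity-principle argument slice by slice.
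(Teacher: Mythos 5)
Your proof is correct. The paper itself does not prove this lemma --- it is recalled from the literature (the authors point to \cite{FCAL1,CoSaSo} for the omitted proofs) --- so there is no in-text argument to compare against; your direct verification is the standard one and is self-contained. The key computational points all check out: on the slice $\mathbb{C}_I$ the quadratic factor $A=(s-u)^2+v^2$ (resp.\ $C=(x-p)^2+q^2$) is a paravector independent of $I$ (resp.\ $J$), its norm $a_0^2+4(\Re s-u)^2|\underline s|^2$ vanishes precisely on $[x]$ so $A^{-1}=\bar A/|A|^2$ exists off that set, the components you extract are even/odd in $v$ as required by \eqref{CCondslic}, and the Cauchy--Riemann check collapses to $A^{-1}=\bigl((s-u)^2+v^2\bigr)A^{-2}$ exactly because $s-u$ commutes with $A$ --- the one noncommutativity trap, which you correctly flag. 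Your placement of the imaginary unit ($If_1$ versus $f_1I$) in the four cases matches Definition \ref{SHolDef}, which is the other place such an argument typically goes wrong. The series expansion you mention as an alternative is the route most often taken in the cited references (each term $x^m s^{-1-m}$ being manifestly of the right type, with the kernel obtained as the slice monogenic extension/sum), and your remark that it only covers $|x|\ne|s|$ without an additional identity-principle step is accurate.
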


\begin{definition}[Slice Cauchy domain]\label{Slice Cauchy domain}
An axially symmetric open set $U\subset \mathbb{R}^{n+1}$ is called a slice Cauchy domain, if $U\cap\cc_I$ is a Cauchy domain in $\cc_I$ for any $I\in\mathbb{S}$. More precisely, $U$ is a slice Cauchy domain if, for any $I\in\mathbb{S}$, the boundary ${\partial( U\cap\cc_I)}$ of $U\cap\cc_I$ is the union a finite number of non-intersecting piecewise continuously differentiable Jordan curves in $\cc_{I}$.
\end{definition}
\begin{theorem}[Cauchy formulas]\label{Cauchy formulas}
\label{Cauchygenerale}
Let $U\subset\mathbb{R}^{n+1}$ be a slice Cauchy domain, let $I\in\mathbb{S}$ and set  $ds_I=ds (-I)$.
If $f$ is a (left) slice monogenic function on a set that contains $\overline{U}$ then
\begin{equation}\label{cauchynuovo}
 f(x)=\frac{1}{2 \pi}\int_{\partial (U\cap \mathbb{C}_I)} S_L^{-1}(s,x)\, ds_I\,  f(s),\qquad\text{for any }\ \  x\in U.
\end{equation}
If $f$ is a right slice monogenic function on a set that contains $\overline{U}$,
then
\begin{equation}\label{Cauchyright}
 f(x)=\frac{1}{2 \pi}\int_{\partial (U\cap \mathbb{C}_I)}  f(s)\, ds_I\, S_R^{-1}(s,x),\qquad\text{for any }\ \  x\in U.
 \end{equation}
These integrals  depend neither on $U$ nor on the imaginary unit $I\in\mathbb{S}$.
\end{theorem}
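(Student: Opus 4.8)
The plan is to reduce the Cauchy formula to the classical Cauchy integral formula on a single slice $\mathbb{C}_I$ and then to propagate it to all of $U$ by means of the slice structure, using that the Cauchy kernel is itself slice monogenic in the variable $x$.

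First, I would treat the case $x\in U\cap\mathbb{C}_I$. The elementary point here is that when both $s$ and $x$ lie in the commutative plane $\mathbb{C}_I$, writing $x=x_0+Iv$ one has $s^2-2\Re(x)s+|x|^2=(s-x_0)^2+v^2=(s-x)(s-\bar x)$ with the two factors commuting, so that the Cauchy kernel collapses: $S_L^{-1}(s,x)=(s-\bar x)\big((s-x)(s-\bar x)\big)^{-1}=(s-x)^{-1}$. Since $f$ is left slice monogenic, its restriction $f|_{U\cap\mathbb{C}_I}$ lies in the kernel of $\partial_u+I\partial_v$, i.e.\ it is an $\mathbb{R}_n$-valued holomorphic function on the planar domain $U\cap\mathbb{C}_I$, and hence — reading it off in a real basis adapted to $I$, as in the splitting lemma — the classical vector-valued Cauchy integral formula applies componentwise. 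Matching the normalisation using $ds_I=ds\,(-I)$ and $I^{-1}=-I$ (noting that $(s-x)^{-1}$ and the scalar $ds$ both commute with $I$ inside $\mathbb{C}_I$), this classical formula is precisely \eqref{cauchynuovo} restricted to $x\in U\cap\mathbb{C}_I$.

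Next, I would extend the identity to arbitrary $x\in U$. Put $g(x):=\frac{1}{2\pi}\int_{\partial(U\cap\mathbb{C}_I)}S_L^{-1}(s,x)\,ds_I\,f(s)$ for $x\in U$. For each fixed $s$ on the contour the map $x\mapsto S_L^{-1}(s,x)$ is left slice monogenic on $U$ by the Lemma recalled above, and since the compact contour $\partial(U\cap\mathbb{C}_I)$ stays at positive distance from all the spheres $[x]$, $x\in U$, one may differentiate under the integral sign; hence $g$ is left slice monogenic on $U$, in particular a left slice function, $g(u+I_xv)=g_0(u,v)+I_xg_1(u,v)$. By the previous paragraph $g=f$ on $U\cap\mathbb{C}_I$. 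But a left slice function on an axially symmetric open set is completely determined by its restriction to one slice: from the values $g(u\pm Iv)$ one recovers $g_0(u,v)$ and $g_1(u,v)$, hence $g$ on the whole of $[u+Iv]$. Therefore $g\equiv f$ on $U$, which is \eqref{cauchynuovo}. The independence statement is then immediate: the argument uses nothing particular about $I$, so the integral over $\partial(U\cap\mathbb{C}_I)$ equals $f(x)$ for every $I\in\mathbb{S}$; and for any other slice Cauchy domain $U'$ with $x\in U'$ and $\overline{U'}$ inside the domain of slice monogenicity of $f$ the same computation again returns $f(x)$. Formula \eqref{Cauchyright} is obtained by running the identical scheme for $S_R^{-1}(s,x)$, which on $\mathbb{C}_I$ also reduces to $(s-x)^{-1}$ (by the same factorisation, the factors commuting), which is right slice monogenic in $x$, and for which the classical formula on $U\cap\mathbb{C}_I$ reads $f(x)=\frac{1}{2\pi}\int f(s)\,ds_I\,(s-x)^{-1}$ and extends by the right-slice analogue of the determination-by-one-slice argument.

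The main obstacle is not conceptual but bookkeeping: carefully tracking the constant $\tfrac{1}{2\pi}$, the sign concealed in $ds_I=ds\,(-I)$, and the orientation of the curve when passing from the classical Cauchy formula to its Clifford form; and — slightly more substantive — checking that the candidate $g$ is genuinely a slice (indeed slice monogenic) function, which is what makes the determination-by-one-slice step legitimate and which rests precisely on the already-recalled fact that $S_L^{-1}(s,x)$ is left slice monogenic in $x$. As an alternative to that last step one could instead invoke the Representation Formula for slice monogenic functions, combining the representation formula for $f$ with that for the kernel through the complementary idempotents $\tfrac{1\pm I_xJ}{2}$; this is a short non-commutative computation that I would keep in reserve should the identity-principle shortcut require further justification in the present setting.
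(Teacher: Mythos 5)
Your argument is correct: the paper states Theorem \ref{Cauchy formulas} without proof, recalling it from \cite{FCAL1,CoSaSo}, and what you propose is essentially the standard proof given there — collapse of the kernel to $(s-x)^{-1}$ when $s,x\in\mathbb{C}_I$, the splitting lemma plus the classical planar Cauchy formula on $U\cap\mathbb{C}_I$, and propagation off the slice using that $x\mapsto S_L^{-1}(s,x)\,ds_I\,f(s)$ integrates to a left slice monogenic function (equivalently, via the representation formula), so agreement on one slice forces agreement on all of $U$ and yields the independence of $U$ and $I$. The only points needing the explicit care you already flag are the justification for differentiating under the integral sign (the contour is compact and stays at positive distance from $[x]$ for $x\in U$) and the orientation and normalisation bookkeeping hidden in $ds_I=ds(-I)$.
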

Even though $S_L^{-1}(s,x)$ written in the form I is more suitable for several applications, for example for the definition of a functional calculus, see \cite{NONCOMMBOOK}, it does not allow easy computations of the powers of the Laplacian
$$
\Delta_{n+1}=\pp^2_{x_0}+\sum_{j=1}^n\pp^2_{x_j},
$$ with respect to the variable $x$ applied to it.
The form II is the one that allows, by iteration, the computation of
$\Delta_{n+1}^{\frac{n-1}{2}} S_L^{-1}(s,x)$. In the sequel we will write $\Delta$ instead of $\Delta_{n+1}$.
\begin{theorem}\label{Laplacian_comp}
Let $x$,
$s\in \rr^{n+1}$
be such that
 $x\not\in [s]$.
Let
$
S_L^{-1}(s,x)=(s-\bar x)(s^2-2{\rm Re}(x)s+|x|^2)^{-1}
$
 be the slice-monogenic Cauchy kernel and let
 $
 \Delta=\sum_{i=0}^n\frac{\partial^2}{\partial x_i^2}
 $
  be the Laplace operator in the variable $x$.
Then, for $h\geq 1$, we have:
\begin{equation}\label{hLaplacian}
\Delta^hS_L^{-1}(s,x)=(-1)^h\prod_{\ell=1}^h(2\ell) \prod_{\ell=1}^h (n-(2\ell -1))
(s-\bar x)(s^2-2{\rm Re}(x)s +|x|^2)^{-(h+1)}
\end{equation}
and
\begin{equation}\label{hLaplacianR}
\Delta^hS_R^{-1}(s,x)=(-1)^h\prod_{\ell=1}^h(2\ell) \prod_{\ell=1}^h (n-(2\ell -1))(s^2-2{\rm Re}(x)s +|x|^2)^{-(h+1)}(s-\bar x).
\end{equation}
\end{theorem}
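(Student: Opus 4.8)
The plan is to prove the left identity \eqref{hLaplacian} by induction on $h$ and to deduce the right identity \eqref{hLaplacianR} from it. Throughout I work in the real coordinates $x=x_0+\underline x=x_0+\sum_{j=1}^{n}x_je_j$, treat $s$ as a constant paravector, and abbreviate $\mathcal{Q}=\mathcal{Q}(s,x):=s^2-2\Re(x)s+|x|^2$, so that $S_L^{-1}(s,x)=(s-\bar x)\,\mathcal{Q}^{-1}$ as in the statement. Since $\Delta^{h}$ is just $\Delta$ iterated $h$ times, the whole proof reduces to the single-step identity
\[
\Delta\big[(s-\bar x)\,\mathcal{Q}^{-m}\big]=-2m\,(n-2m+1)\,(s-\bar x)\,\mathcal{Q}^{-(m+1)},\qquad m\geq 1,
\]
because chaining it from $m=1$ up to $m=h$ produces precisely the constant $\prod_{m=1}^{h}\big(-2m(n-2m+1)\big)=(-1)^{h}\prod_{\ell=1}^{h}(2\ell)\prod_{\ell=1}^{h}\big(n-(2\ell-1)\big)$ appearing in \eqref{hLaplacian}.

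To establish the single-step identity I would first gather the elementary facts. The structural observation that makes the argument work is that $\mathcal{Q}$ is a paravector whose vector part is a real multiple of $\underline s$: indeed $\mathcal{Q}=\big((x_0-s_0)^2-|\underline s|^2+|\underline x|^2\big)-2(x_0-s_0)\underline s$. Hence $\mathcal{Q}$, $\bar{\mathcal{Q}}$, and each partial $\partial_{x_i}\mathcal{Q}$ — which equals $2(x_0-s)$ for $i=0$ and the scalar $2x_i$ for $i\geq1$ — all commute with one another, so that $\partial_{x_i}(\mathcal{Q}^{-m})=-m(\partial_{x_i}\mathcal{Q})\mathcal{Q}^{-(m+1)}$ unambiguously and factors $\mathcal{Q}^{-k}$ may be slid freely past the $\partial_{x_i}\mathcal{Q}$. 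Next I record: $\Delta(s-\bar x)=0$; $\Delta\mathcal{Q}=2(n+1)$; the quadratic identity $\sum_{i=0}^{n}(\partial_{x_i}\mathcal{Q})^{2}=4\mathcal{Q}$ (a short expansion shows both sides equal $4\big[(x_0-s_0)^2-2(x_0-s_0)\underline s-|\underline s|^2+|\underline x|^2\big]$); and the mixed identity $\sum_{i=0}^{n}\big(\partial_{x_i}(s-\bar x)\big)(\partial_{x_i}\mathcal{Q})=2(s-\bar x)$, which is one line. From the first three of these, $\Delta(\mathcal{Q}^{-m})=-m(\Delta\mathcal{Q})\mathcal{Q}^{-(m+1)}+m(m+1)\big(\sum_{i}(\partial_{x_i}\mathcal{Q})^2\big)\mathcal{Q}^{-(m+2)}=\big(-2m(n+1)+4m(m+1)\big)\mathcal{Q}^{-(m+1)}=-2m(n-2m-1)\mathcal{Q}^{-(m+1)}$.

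Now I apply the componentwise, order-preserving Leibniz rule $\Delta(fg)=(\Delta f)g+2\sum_{i=0}^{n}(\partial_{x_i}f)(\partial_{x_i}g)+f(\Delta g)$ with $f=s-\bar x$ and $g=\mathcal{Q}^{-m}$: the first term vanishes; the middle term is $-2m\big(\sum_{i}(\partial_{x_i}f)(\partial_{x_i}\mathcal{Q})\big)\mathcal{Q}^{-(m+1)}=-4m(s-\bar x)\mathcal{Q}^{-(m+1)}$; and the last term is $(s-\bar x)\big(-2m(n-2m-1)\big)\mathcal{Q}^{-(m+1)}$. Adding the three contributions yields $\big(-4m-2m(n-2m-1)\big)(s-\bar x)\mathcal{Q}^{-(m+1)}=-2m(n-2m+1)(s-\bar x)\mathcal{Q}^{-(m+1)}$, which is the single-step identity and hence \eqref{hLaplacian}. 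For \eqref{hLaplacianR} I would run the same induction on $S_R^{-1}(s,x)=\mathcal{Q}^{-1}(s-\bar x)$; the only change is that the powers $\mathcal{Q}^{-k}$ now sit on the left, but since each $\partial_{x_i}\mathcal{Q}$ commutes with $\mathcal{Q}$ the factor $\mathcal{Q}^{-(m+1)}$ can be pulled out to the far left in every term, giving $\sum_i(\partial_{x_i}\mathcal{Q})\mathcal{Q}^{-(m+1)}\partial_{x_i}(s-\bar x)=\mathcal{Q}^{-(m+1)}\sum_i(\partial_{x_i}\mathcal{Q})\partial_{x_i}(s-\bar x)=2\mathcal{Q}^{-(m+1)}(s-\bar x)$, and the same constant drops out. (Equivalently, $S_R^{-1}$ is the Clifford reversion of $S_L^{-1}$, and reversion is $\mathbb{R}$-linear and commutes with the coefficientwise operator $\Delta$, so \eqref{hLaplacianR} follows from \eqref{hLaplacian} immediately.)

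The part I expect to require the most care — the only genuine obstacle — is the noncommutative bookkeeping in the Leibniz step, which works solely because of the commutation fact $[\partial_{x_i}\mathcal{Q},\mathcal{Q}]=0$, together with the sign-sensitive verification of $\sum_{i=0}^{n}(\partial_{x_i}\mathcal{Q})^{2}=4\mathcal{Q}$: a slip there would corrupt the final constant.
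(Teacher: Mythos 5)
Your proof is correct: the one-step identity $\Delta\big[(s-\bar x)\mathcal{Q}^{-m}\big]=-2m(n-2m+1)(s-\bar x)\mathcal{Q}^{-(m+1)}$, the commutativity of $\mathcal{Q}$ with its partial derivatives (all lying in $\mathbb{R}+\mathbb{R}\underline{s}$), the identities $\sum_i(\partial_{x_i}\mathcal{Q})^2=4\mathcal{Q}$ and $\sum_i(\partial_{x_i}(s-\bar x))(\partial_{x_i}\mathcal{Q})=2(s-\bar x)$, and the resulting product of constants all check out and reproduce the stated coefficient (indeed giving $\gamma_n$ for $h=(n-1)/2$). The paper only recalls this theorem from \cite{CoSaSo}, where it is obtained by the same kind of direct iterated-Laplacian computation, so your argument is essentially the same route, just organized more efficiently through the Leibniz rule and induction, with the reversion remark yielding \eqref{hLaplacianR} at no extra cost.
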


We recal the definition of Monogenic functions.
\begin{definition}[Monogenic functions] Let $U$ be an open set in $\mathbb{R}^{n+1}$.
A real differentiable function $f: U\to \mathbb{R}_n$ is left monogenic if
$$
Df(x):=\pp_{x_0}f(x)+\sum_{i=1}^n e_i\pp_{x_i}f(x)=0.
$$
It is right monogenic if
$$
f(x)D:=\pp_{x_0}f(x)+\sum_{i=1}^n \pp_{x_i}f(x)e_i=0.
$$
\end{definition}

In the sequel, we will also need the property of slice monogenicity of the functions $\Delta^{h}S_L^{-1}(s,x)$ and $\Delta^{h}S_L^{-1}(s,x)$ as shown in the following result:
\begin{proposition}\label{Laplacian_smonogenic}
Let $x$,
$s\in \rr^{n+1}$
be such that
 $x\not\in [s]$. Then we have.

 (I)
The function $\Delta^hS_L^{-1}(s,x)$
is a right slice monogenic function in the variable $s$ for all $h\geq 0$ and for all $x\not\in [s]$.

(II)
The function $\Delta^hS_R^{-1}(s,x)$
is a left slice monogenic function in the variable $s$ for all $h\geq 0$ and for all $x\not\in [s]$.
\end{proposition}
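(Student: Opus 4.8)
Proof plan. The plan is to reduce the whole statement to the case $h=0$ — which is exactly the Lemma recalled above, saying that $S_L^{-1}(s,x)$ is right slice monogenic in $s$ and $S_R^{-1}(s,x)$ is left slice monogenic in $s$ — and then to exploit the obvious fact that the operator $\Delta=\sum_{i=0}^{n}\partial_{x_i}^2$ differentiates only with respect to the variables $x_0,\dots ,x_n$ and therefore leaves untouched the slice structure in the \emph{other} variable $s$. Equivalently one could start from the explicit formula \eqref{hLaplacian} and check slice monogenicity of the kernel $(s-\bar x)(s^2-2\Re(x)s+|x|^2)^{-(h+1)}$ in $s$ directly, but the commutation argument is cleaner because it does not use the constant in front.

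Concretely, I would fix $x$ with $s\notin[x]$ and write, for $s=u+Iv\in\mathbb{R}^{n+1}\setminus[x]$ with $u,v\in\mathbb{R}$ and $I\in\mathbb{S}$,
\[
S_L^{-1}(s,x)=A_0(u,v;x)+A_1(u,v;x)\,I,
\]
where, by the Lemma, the $\mathbb{R}_n$-valued coefficients $A_0,A_1$ are independent of $I$ and satisfy the compatibility conditions \eqref{CCondslic} and the Cauchy--Riemann system \eqref{CRR} in $(u,v)$. Since $S_L^{-1}$ is real-analytic on $\{(s,x):s\notin[x]\}$, each $A_j$ is $C^\infty$ in $x$, and, holding $I$ fixed, differentiation in $x_i$ yields $\partial_{x_i}S_L^{-1}(s,x)=\partial_{x_i}A_0(u,v;x)+\partial_{x_i}A_1(u,v;x)\,I$; that is, $\partial_{x_i}S_L^{-1}(s,\cdot)$ is again a right slice function in $s$ with components $(\partial_{x_i}A_0,\partial_{x_i}A_1)$. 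Because $\partial_{x_i}$ commutes with $\partial_u$ and $\partial_v$, differentiating the identities in \eqref{CCondslic} and \eqref{CRR} with respect to $x_i$ shows that $(\partial_{x_i}A_0,\partial_{x_i}A_1)$ still satisfies \eqref{CCondslic} and \eqref{CRR}. Hence $\partial_{x_i}S_L^{-1}(s,\cdot)$ is right slice monogenic in $s$, and iterating $2h$ times gives that $\Delta^{h}S_L^{-1}(s,\cdot)=\bigl(\sum_{i=0}^{n}\partial_{x_i}^2\bigr)^{h}S_L^{-1}(s,\cdot)$ is right slice monogenic in $s$ for every $h\ge0$ and every $x\notin[s]$; this is part (I). Part (II) follows verbatim, using the left slice decomposition $A_0+IA_1$ in place of $A_0+A_1I$ and the Lemma applied to $S_R^{-1}$.

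The one point that deserves care — and where a referee might press — is the interchange of the $x$-differentiation with the passage $g\mapsto(g_0,g_1)$ from a slice function to its two defining components; this is legitimate here because $S_L^{-1}$, $S_R^{-1}$ and all their $x$-derivatives are smooth and jointly real-analytic away from $[x]=[s]$, so the representation $g_0=\tfrac12\bigl(g(u+Iv,x)+g(u-Iv,x)\bigr)$, together with the analogous expression for $g_1$, depends smoothly on $x$ and does not depend on $I$. If one prefers to avoid this remark altogether, one may instead invoke \eqref{hLaplacian} of Theorem \ref{Laplacian_comp}, reducing the claim to the right (resp. left) slice monogenicity in $s$ of $(s-\bar x)(s^2-2\Re(x)s+|x|^2)^{-(h+1)}$; for $|x|<|s|$ this follows from a locally uniformly convergent expansion of the kernel in a series $\sum_{m\ge 0}P_m(x)\,s^{-1-m}$ with $P_m(x)\in\mathbb{R}_n$, each summand being a left-constant multiple of the intrinsic slice monogenic function $s^{-1-m}$, and then for all $x\notin[s]$ by real-analytic continuation in $x$ over the connected set $\mathbb{R}^{n+1}\setminus[s]$, using that right (resp. left) slice monogenicity is preserved under locally uniform limits and under multiplication on the left by constants.
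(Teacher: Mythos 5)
Your argument is correct, and in fact the paper does not prove Proposition \ref{Laplacian_smonogenic} at all: it is recalled as known, with the proofs deferred to the references \cite{FCAL1,CoSaSo}, so there is no in-text proof to match. Your primary route -- reduce to $h=0$ via the Lemma and observe that $\partial_{x_i}$ commutes with the slice decomposition in $s$, so the components $(\partial_{x_i}A_0,\partial_{x_i}A_1)$ inherit \eqref{CCondslic} and \eqref{CRR}, and then note that $\Delta^h$ is a real-linear combination of compositions of such derivatives -- is clean and complete, and your remark about why the components depend smoothly on $x$ (the $I$-independent even/odd representation of $A_0,A_1$ together with joint real-analyticity off $[s]$) addresses the only delicate point. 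The standard argument in the cited literature instead goes through the explicit formula \eqref{hLaplacian} of Theorem \ref{Laplacian_comp}, which you offer as an alternative; however, your second route is heavier than necessary: since $g(s)=(s^2-2\Re(x)s+|x|^2)^{-(h+1)}$ has real coefficients it is an intrinsic slice function of $s$ on $\rr^{n+1}\setminus[x]$, so writing $(s-\bar x)g(s)=s\,g(s)-\bar x\,g(s)$ exhibits the kernel as an intrinsic function minus a left-constant multiple of an intrinsic function, and both are right slice monogenic in $s$ (resp.\ the mirror-image decomposition gives left slice monogenicity for the $S_R^{-1}$ kernel); this makes the power-series expansion, the locally uniform limit theorem, and the real-analytic continuation over $\rr^{n+1}\setminus[s]$ unnecessary. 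Either way, what your commutation argument buys over the explicit-formula route is independence from the combinatorial constant in \eqref{hLaplacian}, while the explicit route has the advantage of covering at once the fractional-power kernels treated later in the paper, where no pointwise Laplacian iteration is available.
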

\begin{proposition}\label{Laplacian}
Let $n$ be an odd number and
let $x$,
$s\in \rr^{n+1}$
be such that
 $x\not\in [s]$.
Then the function $\Delta^{h}S_L^{-1}(s,x)$ is a left monogenic function in the variable $x$,
and the function  $\Delta^{h}S_R^{-1}(s,x)$ is a right monogenic function in the variable $x$, if and only if $h=\frac{n-1}{2}$.
\end{proposition}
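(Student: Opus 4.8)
The plan is to use the explicit formula for $\Delta^h S_L^{-1}(s,x)$ from Theorem \ref{Laplacian_comp}, together with the characterization of monogenicity via the factorization $D\overline{D} = \Delta$, where $\overline{D} = \partial_{x_0} - \sum_{i=1}^n e_i\partial_{x_i}$ is the conjugated Dirac operator. First I would recall from \eqref{hLaplacian} that, up to the nonzero scalar constant $(-1)^h\prod_{\ell=1}^h(2\ell)\prod_{\ell=1}^h(n-(2\ell-1))$ (nonzero precisely when $h \le (n-1)/2$ in the odd case, and in particular for $h = (n-1)/2$), one has
\[
\Delta^h S_L^{-1}(s,x) = c_{n,h}\,(s-\bar x)(s^2 - 2\Re(x)s + |x|^2)^{-(h+1)}.
\]
So the statement reduces to showing that the function $x \mapsto (s-\bar x)(s^2 - 2\Re(x)s + |x|^2)^{-(h+1)}$ is left monogenic in $x$ if and only if $h+1 = (n+1)/2$.

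The key computational step is to apply $\overline{D}$ (in the variable $x$) to the scalar function $g_\alpha(x) := (s^2 - 2\Re(x)s + |x|^2)^{-\alpha}$ and to $\overline{D}$ applied to the paravector-valued factor. Writing $\mathcal{Q}_s(x) := s^2 - 2\Re(x)s + |x|^2 = |s|^2 - 2x_0\Re(s) + |x|^2$ — a real-valued, strictly positive quantity for $x \notin [s]$ — one computes $\partial_{x_0}\mathcal{Q}_s = 2(x_0 - \Re(s))$ and $\partial_{x_i}\mathcal{Q}_s = 2x_i$ for $i \ge 1$, so that $\overline{D}\,\mathcal{Q}_s^{-\alpha} = -\alpha\,\mathcal{Q}_s^{-\alpha-1}\,\overline{D}\mathcal{Q}_s = -2\alpha\,\mathcal{Q}_s^{-\alpha-1}(x_0 - \Re(s) - \underline{x}) = -2\alpha\,\mathcal{Q}_s^{-\alpha-1}(x - \Re(s) - 2\underline{x})$; a cleaner bookkeeping is to note $x_0 - \Re(s) - \underline{x}$ relates to $\bar x$ and $s$. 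Then I would apply the Leibniz-type rule $\overline{D}(Fg) = (\overline{D}F)g + $ (terms from $F$ acting through the $e_i$), being careful that $\overline{D}$ is a first-order operator acting on a noncommutative product: since $g = \mathcal{Q}_s^{-(h+1)}$ is real-valued (scalar), it commutes with everything, so $\overline{D}\big((s-\bar x)\mathcal{Q}_s^{-(h+1)}\big) = \big(\overline{D}(s-\bar x)\big)\mathcal{Q}_s^{-(h+1)} + \sum_i e_i\,(s-\bar x)\,\partial_{x_i}\mathcal{Q}_s^{-(h+1)}$ — wait, one must instead write it as $\big(\overline{D}(s-\bar x)\big)g + \big(\text{contraction of }\nabla g\text{ with }(s-\bar x)\big)$, which requires tracking the $e_i$-order carefully. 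Computing $\overline{D}(s - \bar x) = \overline{D}(s - x_0 + \underline{x}) = -1 + \sum_i e_i \cdot (-e_i)\cdot(-1)$... more precisely $\overline{D}(-\bar x) = \overline{D}(-x_0 + \underline x) = -1 + \sum_{i=1}^n e_i e_i\,(\text{with sign}) = -1 - n$ (using $e_i^2 = -1$ and the sign in $\overline D$), giving a constant $-(n+1)$ or similar.

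Assembling these, $\overline{D}\big((s-\bar x)\mathcal{Q}_s^{-(h+1)}\big)$ becomes $\mathcal{Q}_s^{-(h+2)}$ times a paravector-valued polynomial expression of the form $\big[-(n+1)\mathcal{Q}_s + 2(h+1)(s-\bar x)(\,\overline{x} - \Re(s)\text{-type factor}\,)\big]$; using the identity $(s-\bar x)(x - \Re(s)) = \tfrac12(\ldots) $ and the relation $(s-\bar x)$ times the vector part to simplify, one finds the bracket equals $\big(2(h+1) - (n+1)\big)\mathcal{Q}_s$ plus a nilpotent/vanishing remainder — so it vanishes identically iff $2(h+1) = n+1$, i.e. $h = (n-1)/2$. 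For the "only if" direction one just observes that when $2(h+1) \ne n+1$ the leading coefficient $\big(2(h+1)-(n+1)\big)\mathcal{Q}_s^{-(h+1)}$ is a nonzero function (since $\mathcal{Q}_s > 0$), so $D$ applied to $\Delta^h S_L^{-1}$ is not identically zero. The right-kernel statement for $S_R^{-1}$ follows symmetrically, applying $\overline{D}$ on the right and using \eqref{hLaplacianR}, or alternatively by the symmetry $S_R^{-1}(s,x)$ in form II being the "reversed product" of $S_L^{-1}(s,x)$.

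The main obstacle I anticipate is the bookkeeping of the noncommutative Leibniz rule for $\overline{D}$ acting on the product of the paravector $(s-\bar x)$ with the scalar $\mathcal{Q}_s^{-\alpha}$: although $\mathcal{Q}_s^{-\alpha}$ is central, its gradient is a paravector that must be multiplied on the correct side, and collapsing the resulting expression $\sum_i e_i(s-\bar x)e_i\,x_i$-type sums into a clean paravector requires the Clifford algebra identity $\sum_i e_i \underline{v} e_i = (n-2)\underline{v}$ (for a vector $\underline v$) and $\sum_i e_i e_i = -n$. Once these contraction identities are invoked correctly, the rest is the elementary algebra of recognizing the bracketed coefficient as a scalar multiple of $\mathcal{Q}_s$. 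I would also double-check the edge case $h = 0$ (the Cauchy kernel itself is only monogenic when $n = 1$), which the formula $2(h+1) = n+1$ correctly predicts.
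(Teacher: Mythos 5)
Your overall strategy -- reduce via Theorem \ref{Laplacian_comp} and \eqref{hLaplacian} to the explicit kernel $(s-\bar x)\,\mathcal{Q}_s(x)^{-(h+1)}$, with $\mathcal{Q}_s(x)=s^2-2\Re(x)s+|x|^2$, and then test monogenicity by a direct first-order computation -- is exactly the route the paper takes (its Theorem \ref{sceqianexponen} carries out this computation for arbitrary real exponents). But two points in your execution are genuine errors, not just bookkeeping. First, left monogenicity in this paper means $Df=0$ with $D=\partial_{x_0}+\sum_{i=1}^n e_i\partial_{x_i}$, yet you propose to apply $\overline D=\partial_{x_0}-\sum_i e_i\partial_{x_i}$; $\overline D$ does not annihilate the $F$-kernel (already for $n=1$ the kernel reduces to $(s-z)^{-1}$, which is killed by $\partial_{x_0}+e_1\partial_{x_1}$ and not by $\partial_{x_0}-e_1\partial_{x_1}$). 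Your sample evaluation ``$\overline D(s-\bar x)=-(n+1)$'' is in fact the value of $D(s-\bar x)$, so the slip may be notational, but as written the proof tests the wrong equation. Second, and more seriously, the claim that $\mathcal{Q}_s(x)=|s|^2-2x_0\Re(s)+|x|^2$ is real-valued and positive is false unless $s$ is real: for a paravector $s=s_0+\underline s$ one has $s^2-2x_0 s=s_0^2-|\underline s|^2-2x_0s_0+2(s_0-x_0)\underline s$, so $\mathcal{Q}_s(x)$ is a $\mathbb{C}_{I_s}$-valued quantity with nonzero vector part, and $\partial_{x_0}\mathcal{Q}_s=2(x_0-s)$ is a paravector, not the real number $2(x_0-\Re(s))$. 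Everything downstream -- the assertion that $\mathcal{Q}_s^{-(h+1)}$ is a central scalar that ``commutes with everything,'' the Leibniz bookkeeping via $\sum_i e_i\underline v e_i=(n-2)\underline v$, and the identification of the bracket with $\bigl(2(h+1)-(n+1)\bigr)\mathcal{Q}_s$ -- rests on that false premise. As written your argument establishes the statement only for $s\in\mathbb{R}$ (where indeed the computation closes cleanly and gives the factor $2(h+1)-(n+1)$), not for a general paravector $s$ as the proposition requires.

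The gap can be closed in either of two ways, but you supply neither: (i) redo the computation keeping track of the non-real denominator, writing $s=u+Iv$ and working with the $\mathbb{C}_I$-valued quantity, which is precisely what the paper does in the proof of Theorem \ref{sceqianexponen}; or (ii) keep your real-$s$ computation and add an extension argument in $s$, e.g.\ observing that $D_x\Delta^{h}S_L^{-1}(s,x)$ is right slice monogenic in $s$ (Proposition \ref{Laplacian_smonogenic}) and vanishes for $s$ real, hence vanishes for all $s\notin[x]$ by the identity principle, with a separate nonvanishing check for the ``only if'' part. A smaller point worth flagging: for $n$ odd the constant in \eqref{hLaplacian} vanishes as soon as $h\ge\frac{n+1}{2}$, so $\Delta^{h}S_L^{-1}\equiv 0$ is then trivially monogenic; your reduction therefore yields the ``only if'' direction only in the range $0\le h\le\frac{n-1}{2}$, and this restriction (really an imprecision in the statement itself) should be made explicit.
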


\begin{definition}[The $\mathcal{F}_n$-kernels]\label{Fnker}

Let $n$ be an odd number. Let $x$,
$s\in \rr^{n+1}$.
We define, for $s\not\in[x]$, the left $\mathcal{F}^L_n$-kernel as
$$
\mathcal{F}^L_n(s,x):=\Delta^{\frac{n-1}{2}}S_L^{-1}(s,x)
=\gamma_n(s-\bar x)(s^2-2{\rm Re}(x)s +|x|^2)^{-\frac{n+1}{2}},
$$
and the right $\mathcal{F}^R_n$-kernel as
$$
\mathcal{F}^R_n(s,x):=\Delta^{\frac{n-1}{2}}S_R^{-1}(s,x)
=\gamma_n(s^2-2{\rm Re}(x)s +|x|^2)^{-\frac{n+1}{2}}(s-\bar x),
$$
where $\gamma_n$ are given by (\ref{gammn}).
\end{definition}

\begin{theorem}[The Fueter-Sce mapping theorem in integral form, see \cite{CoSaSo}]
Let $n$ be an odd number.
Let $U\subset\mathbb{R}^{n+1}$ be a slice Cauchy domain, let $I\in\mathbb{S}$ and set  $ds_I=ds (-I)$.

(I) Let $f$ is a left slice monogenic function on a set that contains $\overline{U}$, then
the monogenic function
$
\breve{f}(x)=\Delta^{\frac{n-1}{2}}f(x)
$
 admits the integral representation
\begin{equation}\label{FueterL}
 \breve{f}(x)=\frac{1}{2 \pi}\int_{\pp (U\cap \mathbb{C}_I)} \mathcal{F}^L_n(s,x)ds_I f(s), \ \ x\in U.
\end{equation}

(II)
 Let $f$ is a right slice monogenic function on a set that contains $\overline{U}$, then
the monogenic function
$
\breve{f}(x)=\Delta^{\frac{n-1}{2}}f(x)
$
 admits the integral representation
\begin{equation}\label{FueterR}
 \breve{f}(x)=\frac{1}{2 \pi}\int_{\pp (U\cap \mathbb{C}_I)} f(s) ds_I \mathcal{F}^R_n(s,x), \ \ x\in U.
\end{equation}
Moreover,  the integrals in (\ref{FueterL}) and (\ref{FueterR}) depend neither on  $U$ nor on the imaginary unit
$I\in\mathbb{S}$.
\end{theorem}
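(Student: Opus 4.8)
The plan is to derive \eqref{FueterL} by applying the differential operator $\Delta^{\frac{n-1}{2}}$ in the variable $x$ --- which is a genuine integer power of the Laplacian when $n$ is odd --- to the slice monogenic Cauchy formula \eqref{cauchynuovo} and commuting it with the boundary integral. (For $n=1$ the operator is the identity and the assertion is just Theorem \ref{Cauchygenerale}, so I may assume $n\ge 3$, i.e.\ $h:=\frac{n-1}{2}\ge 1$.)

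First I would fix $x\in U$ and $I\in\mathbb{S}$, write $\Sigma:=\pp(U\cap\mathbb{C}_I)$, and verify that one may differentiate under the integral sign in \eqref{cauchynuovo}. Since $U$ is a slice Cauchy domain, $\Sigma$ is a compact union of finitely many non-intersecting piecewise $\mathcal C^1$ Jordan curves lying in $\overline U\cap\mathbb{C}_I$; and since $U$ is axially symmetric and open, the two points $x_0\pm I|\underline{x}|$ making up $[x]\cap\mathbb{C}_I$ belong to the \emph{open} subset $U\cap\mathbb{C}_I$ of $\mathbb{C}_I$, hence not to its boundary $\Sigma$. Therefore $s\notin[x]$ for every $s\in\Sigma$ and, by compactness, $\dist([x],\Sigma)>0$. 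Consequently there is a neighbourhood $V$ of $x$ such that on $\Sigma\times V$ the map $(s,x')\mapsto S_L^{-1}(s,x')$ is $\mathcal C^\infty$ in $x'$ with all partial $x'$-derivatives up to order $n-1$ jointly continuous and uniformly bounded (the denominator $s^2-2\Re(x')s+|x'|^2$ stays bounded away from $0$); since $f$ is continuous on $\Sigma$, the Leibniz rule for differentiation under the integral sign yields
\[
\breve f(x)=\Delta^{\frac{n-1}{2}}f(x)=\frac{1}{2\pi}\int_{\Sigma}\bigl(\Delta_x^{\frac{n-1}{2}}S_L^{-1}(s,x)\bigr)\,ds_I\,f(s).
\]

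Next I would identify the kernel. Theorem \ref{Laplacian_comp} with $h=\frac{n-1}{2}$ (formula \eqref{hLaplacian}) gives $\Delta_x^{\frac{n-1}{2}}S_L^{-1}(s,x)$ in closed form, and a short computation shows that its constant equals $\gamma_n$ of \eqref{gammn} (because $\Gamma(\frac{n+1}{2})=\left(\frac{n-1}{2}\right)!$ for $n$ odd), so the integrand is exactly $\mathcal{F}^L_n(s,x)$ by Definition \ref{Fnker}; this is \eqref{FueterL}. To show that $\breve f$ is left monogenic I would rerun the argument of the previous paragraph with the first-order Dirac operator $D$ in place of $\Delta^{\frac{n-1}{2}}$: differentiating under the integral and using that, by Proposition \ref{Laplacian}, $\mathcal{F}^L_n(s,x)=\Delta_x^{\frac{n-1}{2}}S_L^{-1}(s,x)$ is left monogenic in $x$ precisely because $h=\frac{n-1}{2}$, one gets $D\breve f(x)=0$ for all $x\in U$. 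Finally, independence of $U$ and $I$ is immediate: the function $\breve f=\Delta^{\frac{n-1}{2}}f$ on the left does not refer to any contour, so the equality just proved forces the integral in \eqref{FueterL} to have the same value for every admissible pair $(U,I)$; equivalently this follows from the right slice monogenicity of $s\mapsto\mathcal{F}^L_n(s,x)$ (Proposition \ref{Laplacian_smonogenic}(I)) via Cauchy's theorem inside each slice $\mathbb{C}_I$ together with the representation formula across slices. Part (II) is proved in exactly the same way, starting from the right Cauchy formula \eqref{Cauchyright}, using \eqref{hLaplacianR} and the right-monogenicity half of Proposition \ref{Laplacian}.

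The only genuinely analytic point --- and the main obstacle --- is the legitimacy of interchanging $\Delta^{\frac{n-1}{2}}$ with the contour integral, i.e.\ the uniform control of $S_L^{-1}$ and its $x$-derivatives near $\Sigma$; once the geometric separation $\dist([x],\Sigma)>0$ has been established this is routine, and everything else reduces to the explicit iterated-Laplacian formula already recorded in Theorem \ref{Laplacian_comp}.
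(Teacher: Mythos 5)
Your proposal is correct and follows essentially the same route as the proof the paper relies on: the theorem is quoted here from \cite{CoSaSo}, where it is obtained exactly by applying $\Delta^{\frac{n-1}{2}}$ to the Cauchy formula \eqref{cauchynuovo} (resp.\ \eqref{Cauchyright}), differentiating under the contour integral thanks to the separation of $[x]$ from $\pp(U\cap\mathbb{C}_I)$, and identifying the resulting kernel with $\mathcal{F}^L_n$ (resp.\ $\mathcal{F}^R_n$) via the closed-form computation of $\Delta^{h}S_L^{-1}$, $\Delta^{h}S_R^{-1}$ as in Theorem \ref{Laplacian_comp}, with monogenicity and independence of $U$ and $I$ handled as you indicate. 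Your constant check $(-1)^{h}2^{h}h!\cdot 2^{h}h!=\gamma_n$ for $h=\frac{n-1}{2}$ and the use of Propositions \ref{Laplacian_smonogenic} and \ref{Laplacian} are exactly the ingredients used there, so there is nothing essentially different to compare.
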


We now consider the $\mathcal{F}_n$-kernels in Definition \ref{Fnker} also for the case of the fractional powers.
We recall that given a paravector $y=u+I_yv \in \mathbb{R}^{n+1} \setminus (- \infty,0]$, for
$ \alpha \in \mathbb{R}$, we can define the fractional powers as
\begin{equation}
\label{FP}
y^{\alpha}:= e^{\alpha \log y}= e^{\alpha(\ln|y|+I \arg(y))},
\end{equation}
where $ \arg (y)= \arccos \frac{u}{|y|}.$
The definition is analogue for the quaternions and the fractional powers so defined are slice monogenic functions.
The first natural question that arise is formulated in the following problem.
\begin{problem}
 Proposition \ref{Laplacian} claims that for $n$ odd number the functions
 $$
 (s,x)\mapsto (s-\bar x)(s^2-2{\rm Re}(x)s +|x|^2)^{-(h+1)}
 $$
  and
 $$
 (s,x)\mapsto (s^2-2{\rm Re}(x)s +|x|^2)^{-(h+1)}(s-\bar x),
 $$
   for $x\not\in [s]$,
 are monogenic in the variable $x$ if and only of
 $h=\frac{n-1}{2}$. Is it still true in the case $n$ even?
\end{problem}
The answer is positive and is formulated in the following theorem.

\begin{theorem}\label{sceqianexponen}
\label{mono}
Let $ \lambda$ be a real number and $x,s \in \mathbb{R}^{n+1}$ be such that $x \notin [s]$.
Let us define:
$$
k_L(s,x):= (s- \bar{x}) (s^2-2x_0s+|x|^2)^{-\lambda}, \qquad \lambda \in \mathbb{R},
$$
and
$$
k_R(s,x):= (s^2-2x_0s+|x|^2)^{-\lambda}(s- \bar{x}) , \qquad \lambda \in \mathbb{R},
$$
 for
$$
s^2-2x_0s+|x|^2 \in \mathbb{R}^{n+1}\setminus (- \infty,0].
$$
Then, the function $k_L(s,x)$ is
left monogenic function in the variable $x$
and $k_R(s,x)$ right monogenic in the variable $x$ if and only if $\lambda= \frac{n+1}{2}$.
\end{theorem}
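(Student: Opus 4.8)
\emph{Idea of proof.} The plan is to produce a closed form for $Dk_L$, where $D=\partial_{x_0}+\sum_{i=1}^ne_i\partial_{x_i}$ is the Dirac operator in $x$, after rewriting the kernel in a way adapted to the \emph{fixed} slice plane $\mathbb{C}_{I_s}$ with $I_s=\underline{s}/|\underline{s}|$, rather than to $I_x$. The starting point is the elementary identity
\[
s^2-2x_0s+|x|^2=(s-x_0)^2+|\underline{x}|^2,
\]
which holds because $x_0\in\mathbb{R}$ commutes with $s$. Setting $\tau:=s-x_0=(s_0-x_0)+\underline{s}$ and $r:=|\underline{x}|$ we obtain $k_L(s,x)=(\tau+\underline{x})\,(\tau^2+r^2)^{-\lambda}$. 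Since $\tau^2\in\mathbb{C}_{I_s}$ and $r^2\in\mathbb{R}$, the base $\tau^2+r^2$ of the fractional power lies in the commutative plane $\mathbb{C}_{I_s}$, is by hypothesis outside $(-\infty,0]$, and depends on $x$ only through the pair $(x_0,r)$; hence $g:=(\tau^2+r^2)^{-\lambda}$ also lies in $\mathbb{C}_{I_s}$ and can be written $g=g_0(x_0,r)+\underline{s}\,g_1(x_0,r)$ with $g_0,g_1$ real-valued. (If $\underline{s}=0$ then $\tau^2+r^2$ is a positive real for $x\notin[s]$ and the computation below only simplifies.)

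Next I would expand $k_L=\tau g+\underline{x}\,g=(\phi+\underline{s}\,\psi)+(\underline{x}\,g_0+\underline{x}\,\underline{s}\,g_1)$, where $\phi=(s_0-x_0)g_0-|\underline{s}|^2g_1$ and $\psi=(s_0-x_0)g_1+g_0$ are real-valued functions of $(x_0,r)$; thus $k_L$ is a combination of $1$, the constant vector $\underline{s}$, the vector $\underline{x}$ and $\underline{x}\,\underline{s}$ with real coefficients depending only on $(x_0,r)$. Applying $D$ and using the elementary identities $\sum_ie_i\partial_{x_i}\underline{x}=-n$, $\sum_ie_i\partial_{x_i}h(x_0,r)=\tfrac{\underline{x}}{r}\partial_rh$ and $\sum_ie_i\,\underline{x}\,x_i=-r^2$ — which give $\sum_ie_i\partial_{x_i}(\underline{x}\,h)=-nh-r\partial_rh$, $\sum_ie_i\partial_{x_i}(\underline{s}\,h)=\tfrac{\underline{x}\,\underline{s}}{r}\partial_rh$ and $\sum_ie_i\partial_{x_i}(\underline{x}\,\underline{s}\,h)=-\underline{s}\,(nh+r\partial_rh)$ — one collects $Dk_L$ again in the basis $1,\underline{s},\underline{x},\underline{x}\,\underline{s}$. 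The four resulting scalar coefficients can then be packaged via the field isomorphism $\mathbb{C}_{I_s}\cong\mathbb{C}$ obtained by replacing $\underline{s}$ with $i|\underline{s}|$: writing $w:=(s_0-x_0)+i|\underline{s}|$, so that $g_0+i|\underline{s}|g_1=(w^2+r^2)^{-\lambda}$ and $\phi+i|\underline{s}|\psi=w(w^2+r^2)^{-\lambda}$, and noting $\partial_{x_0}w=-1$, the coefficients of $\{1,\underline{s}\}$ combine into $-\partial_w\!\big(w(w^2+r^2)^{-\lambda}\big)-n(w^2+r^2)^{-\lambda}-r\partial_r(w^2+r^2)^{-\lambda}$ and those of $\{\underline{x},\underline{x}\,\underline{s}\}$ into $-\partial_w(w^2+r^2)^{-\lambda}+\tfrac1r\partial_r\!\big(w(w^2+r^2)^{-\lambda}\big)$.

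A one-line differentiation with $Z:=w^2+r^2$ shows that the second expression vanishes identically for every $\lambda$, while the first equals $(2\lambda-n-1)\,Z^{-\lambda}$. Translating back through $\mathbb{C}\cong\mathbb{C}_{I_s}$ gives the clean closed form
\[
Dk_L(s,x)=(2\lambda-n-1)\,(s^2-2x_0s+|x|^2)^{-\lambda},
\]
and since the fractional power is everywhere invertible, hence nonzero, $k_L$ is left monogenic in $x$ exactly when $2\lambda-n-1=0$, i.e. $\lambda=\frac{n+1}{2}$. For $k_R$ I would invoke the reversal anti-automorphism $a\mapsto\widetilde{a}$ of $\mathbb{R}_n$: it fixes the paravectors $s-\bar x$ and $s^2-2x_0s+|x|^2$ (so it fixes $(s^2-2x_0s+|x|^2)^{-\lambda}$), carries $k_L$ to $k_R$, and satisfies $\widetilde{D a}=\widetilde{a}\,D$ with $aD:=\partial_{x_0}a+\sum_i(\partial_{x_i}a)e_i$; applying it to the displayed identity yields $k_RD=(2\lambda-n-1)(s^2-2x_0s+|x|^2)^{-\lambda}$, with the same conclusion.

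The only genuinely delicate point I anticipate is the non-commutative bookkeeping in the middle step: recognizing that the relevant slice plane is the one attached to $s$ (so that $k_L$ decomposes over precisely the four Clifford elements $1,\underline{s},\underline{x},\underline{x}\,\underline{s}$) and correctly tracking how $D$ acts on the mixed term $\underline{x}\,\underline{s}$. Once this is in place, the computation collapses to elementary one-variable complex calculus, and the exponent $\lambda=\frac{n+1}{2}$ falls out of the single scalar factor $2\lambda-n-1$.
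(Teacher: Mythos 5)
Your computation is correct, and it reaches exactly the paper's intermediate identity: your closed form $Dk_L=(2\lambda-n-1)\,(s^2-2x_0s+|x|^2)^{-\lambda}$ is the paper's $\big[-(n+1)+2\lambda\big]e^{\alpha(u,v)}$, and the conclusion $\lambda=\frac{n+1}{2}$ follows in both cases from the invertibility of the fractional power. The route, however, is organized differently. The paper writes $k_L=(s-\bar x)e^{\alpha(u,v)}$ with the fractional power in the explicit polar form \eqref{FP} (logarithm plus $\arccos\beta(u,v)$) and then differentiates $\beta$, $\arccos\beta$ and the exponential term by term in $x_0$ and $x_j$, finally recombining $(s-\bar x)(s-x)$ into $\gamma(u,v)$ so that $\gamma\overline{\gamma}/|\gamma|^2=1$; the whole branch/arccos bookkeeping is carried through the computation. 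You instead exploit the fact that the base $\tau^2+r^2$ lives in the commutative slice $\mathbb{C}_{I_s}$ and depends on $x$ only through $(x_0,r)$, decompose $k_L$ over $1,\underline{s},\underline{x},\underline{x}\,\underline{s}$ with real coefficients in $(x_0,r)$, apply the standard radial identities for $\sum_ie_i\partial_{x_i}$, and then collapse everything to principal-branch one-variable calculus with $Z=w^2+r^2$; this avoids differentiating $\ln$ and $\arccos$ altogether, and the reversal anti-automorphism gives you $k_R$ for free, where the paper only says ``similarly we proceed.'' Two small points worth making explicit if you write this up: (i) the identification $\mathbb{C}_{I_s}\cong\mathbb{C}$ you use is compatible with the paper's definition \eqref{FP} of the fractional power (it is the principal branch on the slice), which is what legitimizes $\partial_{x_0}\Phi=-\partial_w\Phi$; (ii) the degenerate situations $\underline{s}=0$ (which you note) and $\underline{x}$ parallel to $\underline{s}$ (where $1,\underline{s},\underline{x},\underline{x}\,\underline{s}$ are not independent) cause no trouble because your argument produces $Dk_L$ as an identity rather than by matching coefficients against an independent basis — but saying so removes any doubt. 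In short: same strategy (compute the Dirac derivative and isolate the factor $2\lambda-(n+1)$), but your bookkeeping is more structural and arguably cleaner, while the paper's is fully explicit and self-contained.
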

\begin{proof}
We give the details for $k_L(s,x)$, similarly we proceed for $k_R(s,x)$. For simplicity, in the proof, we write
$k(s,x)$ for $k_L(s,x)$.
We have to compute $(\partial_{x_0}+ \partial_{\underline{x}})k(s,x)$, where $ \partial_{\underline{x}}= \sum_{j=1}^n e_{j}\partial_{x_j}.$ First, we put $s=u+Iv$, thus
\begin{eqnarray*}
s^2-2x_0s+|x|^2= (u^2-v^2-2x_0u+|x|^2)+I(2uv-2x_0v).
\end{eqnarray*}
Using the formula of fractional powers, in \eqref{FP}, we get
\begin{equation}
k(s,x)= (s- \bar{x}) e^{\alpha(u,v)},
\end{equation}
where
\begin{eqnarray*}
\alpha(u,v) &:=& - \frac{\lambda}{2} \ln[(u^2-v^2-2x_0u+|x|^2)^2+(2uv-2x_0v)^2]+\\
&& - \lambda I \arccos \frac{u^2-v^2-2x_0u+|x|^2}{ \sqrt{(u^2-v^2-2x_0u+|x|^2)^2+(2uv-2x_0v)^2}}.
\end{eqnarray*}
Let us denote
$$
\beta(u,v):=\frac{u^2-v^2-2x_0u+|x|^2}{ \sqrt{(u^2-v^2-2x_0u+|x|^2)^2+(2uv-2x_0v)^2}}.
$$
So we have
$$ k(s,x)=(s-\bar{x})e^{- \frac{\lambda}{2} \ln[(u^2-v^2-2x_0u+|x|^2)^2+(2uv-2x_0v)^2]- \lambda I \arccos \beta(u,v)}.$$
To compute $ \partial_{ x_0} k(s,x)$
we calculate the derivative of $ \beta(u,v)$ with respect to $x_0$
$$ \frac{\partial \beta(u,v)}{\partial x_0}= \frac{(-2u+2x_0)(2uv-2x_0v)^2+(u^2-v^2-2x_0u+|x|^2)(2uv-2x_0v)2v}{[(u^2-v^2-2x_0u+|x|^2)^2+(2uv-2x_0v)^2]^{3/2}}.$$
Thus
\begin{eqnarray*}
\frac{\partial  \arccos\beta(u,v)}{\partial x_0}&=&- \frac{\partial_{x_0} \beta(u,v)}{\sqrt{1- \beta^2(u,v)}}\\
&=&- \frac{(-2u+2x_0)(2uv-2x_0 v)+(u^2-v^2-2x_0u+|x|^2) 2v}{(u^2-v^2-2x_0u+|x|^2)^2+(2uv-2x_0v)^2}.
\end{eqnarray*}
So we have
\begin{eqnarray}
\label{P1}
\nonumber
\frac{\partial k(s,x)}{\partial x_0}& =& - e^{\alpha(u,v)} + \frac{(u+Iv- \bar{x}) 2 \lambda \Bigl[(u^2-v^2-2x_0u+|x|^2)(u-x_0+Iv)}{(u^2-v^2-2x_0u+|x|^2)^2+(2uv-2x_0v)^2}\\
&&
-\frac{I(2uv-2x_0v)(Iv+u-x_0)\Bigl]}{(u^2-v^2-2x_0u+|x|^2)^2+(2uv-2x_0v)^2}e^{\alpha(u,v)}.
\end{eqnarray}
Now, we compute the derivative of $k(s,x)$ with respect to $x_j$, $1\leq j \leq n$. As before we start from the derivative of $ \beta(u,v)$
$$ \frac{\partial \beta(u,v)}{\partial x_j}= \frac{2 x_j(2uv-2x_0v)^2}{[(u^2-v^2-2x_0u+|x|^2)^2+(2uv-2x_0v)^2]^{3/2}}.$$
Thus
$$\frac{\partial \arccos \beta(u,v) }{\partial x_j}=-\frac{2 x_j(2uv-2x_0v)}{(u^2-v^2-2x_0u+|x|^2)^2+(2uv-2x_0v)^2}.$$
Therefore
\begin{eqnarray*}
\frac{\partial k(s,x)}{\partial x_j}&=&  \! \! \! e_j e^{\alpha(u,v)} \!- \! \frac{(u+Iv- \bar{x})2 \lambda \Bigl[(u^2-v^2-2x_0u+|x|^2)-I(2uv-2x_0v) \Bigl]}{(u^2-v^2-2x_0u+|x|^2)^2+(2uv-2x_0v)^2} x_j e^{\alpha(u,v)}.
\end{eqnarray*}
Now, we are ready to compute $ \partial_{\underline{x}} k(s,x)$
\begin{eqnarray*}
\partial_{\underline{x}} k(s,x) &=& \sum_{j=1}^n e_j \frac{\partial k(s,x)}{\partial x_j}\\
&=&-n e^{\alpha(u,v)}- \frac{(u+Iv- \bar{x})2 \lambda \Bigl[(u^2-v^2-2x_0u+|x|^2)-I(2uv-2x_0v)\Bigl]}{(u^2-v^2-2x_0u+|x|^2)^2+(2uv-2x_0v)^2} \cdot\\
&& \cdot \left( \sum_{j=1}^n e_j x_j \right) e^{\alpha(u,v)}
\end{eqnarray*}
This implies that
\begin{equation}
\label{P2}
\! \!  \! \! \! \! \! \! \! \!\partial_{\underline{x}} k(s,x) \! \! = \! \!-n e^{\alpha(u,v)} \!- \! \frac{(u+Iv- \bar{x})2 \lambda \Bigl[(u^2-v^2-2x_0u+|x|^2)\! \!- \! \!I(2uv-2x_0v)\Bigl]}{(u^2-v^2-2x_0u+|x|^2)^2+(2uv-2x_0v)^2}  \underline{x} e^{\alpha(u,v)}.
\end{equation}
Hence form \eqref{P1} and \eqref{P2} we get
\begin{eqnarray*}
\! \! \! \!(\partial_{x_0}+ \partial_{\underline{x}})k(s,x) \! \! \! \!&=&  -(n+1) e^{\alpha(u,v)}+ \frac{2 \lambda (u+Iv- \bar{x})(u+Iv-x)\Bigl[(u^2-v^2-2x_0u+|x|^2)}{(u^2-v^2-2x_0u+|x|^2)^2+(2uv-2x_0v)^2} \\
&& -\frac{I(2uv-2x_0v)\Bigl]}{(u^2-v^2-2x_0u+|x|^2)^2+(2uv-2x_0v)^2}e^{\alpha(u,v)}.
\end{eqnarray*}
Now, we observe that
\begin{eqnarray*}
(u+Iv- \bar{x})(u+Iv-x)
&=& (u^2-v^2-2u x_0+|x|^2)+I(2uv-2x_0v).
\end{eqnarray*}
Setting
$$ \gamma(u,v):=(u^2-v^2-2u x_0+|x|^2)+I(2uv-2x_0v)$$
we therefore obtain
$$ (\partial_{x_0}+ \partial_{\underline{x}})k(s,x)= \left[-(n+1)+ 2 \lambda \frac{\gamma(u,v)  \cdot \overline{\gamma(u,v)}}{| \gamma(u,v)|^2}\right] e^{\alpha(u,v)}=\left[-(n+1)+ 2 \lambda \right] e^{\alpha(u,v)},$$
so we finally get
$$(\partial_{x_0}+ \partial_{\underline{x}})k(s,x)=0$$ if and only if $ \lambda= \frac{n+1}{2}$.
\end{proof}

\section{The Fourier transform of the slice monogenic Cauchy kernels.}

The main result of this section is the explicit computation of the Fourier transform of the slice monogenic Cauchy kernels  $S_L^{-1}(s,x)$
  and $S_R^{-1}(s,x)$ with respect to $x$ when $s$ is a real number.
  Then by extension we get the Fourier transform when $s$ is a paravector.
  Firstly, let us introduce the definition of Fourier transform that we will use.
\begin{definition}
\label{Fourier}
Let $f \in \mathcal{S}(\mathbb{R}^{n+1})$. The Fourier transform of the function $f$ is
$$ \hat{f}(\xi):= \mathrm{F}[f(x)](\xi)= \int_{\mathbb{R}^{n+1}} f(x) e^{-i(x, \xi)} dx,$$
where
$$(x, \xi)= \sum_{j=0}^n x_j \xi_j.$$
\end{definition}
\begin{definition}
\label{Fourier2}
We define the inverse Fourier transform of the function $f$ in the following way
$$ \mathcal{R}[{f}(\xi)](x)= \color{black}{\frac{1}{(2 \pi)^{n+1}}}\int_{\mathbb{R}^{n+1}} {f}(\xi) e^{i(x, \xi)} d \xi.$$
\end{definition}
In this paper we will use the following important result
\color{black}{ \begin{theorem}[Plancherel's Theorem]
\label{Plan}
If $ f,g \in \mathcal{S}(\mathbb{R}^{n+1})$ then
\begin{equation}
\int_{\mathbb{R}^{n+1}} f(x) \overline{g(x)} dx= \frac{1}{(2 \pi)^{n+1}}\int_{\mathbb{R}^{n+1}} \mathrm{F}(f)(\xi) \overline{\mathrm{F}(g(\xi))} d\xi.
\end{equation}
\end{theorem}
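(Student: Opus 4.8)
This is the classical Plancherel identity on the Schwartz class and in the paper it is used as a tool, so one natural option is simply to cite a standard reference on Fourier analysis or distribution theory. If instead a self-contained argument is wanted, the plan is to deduce it from the Fourier inversion theorem together with one application of Fubini's theorem, in three short steps, taking care that all interchanges of integrals are legitimate because every function in sight is Schwartz, hence in $L^1(\mathbb{R}^{n+1})$.

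First I would record the \emph{multiplication (exchange) formula}: for $\varphi,\psi\in\mathcal{S}(\mathbb{R}^{n+1})$,
\[
\int_{\mathbb{R}^{n+1}} \mathrm{F}(\varphi)(\xi)\,\psi(\xi)\,d\xi=\int_{\mathbb{R}^{n+1}} \varphi(x)\,\mathrm{F}(\psi)(x)\,dx,
\]
which is immediate from Definition \ref{Fourier} and Fubini's theorem, since $(x,\xi)\mapsto\varphi(x)\psi(\xi)e^{-i(x,\xi)}$ is absolutely integrable on $\mathbb{R}^{n+1}\times\mathbb{R}^{n+1}$. Second, I would invoke the Fourier inversion theorem on $\mathcal{S}(\mathbb{R}^{n+1})$, namely $\mathcal{R}[\mathrm{F}(\varphi)]=\varphi$ for all $\varphi\in\mathcal{S}(\mathbb{R}^{n+1})$, with $\mathcal{R}$ the inverse transform of Definition \ref{Fourier2}. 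If one also wants this proved from scratch, the usual route is to apply the multiplication formula with a Gaussian regulariser $\psi(\xi)=e^{-\varepsilon|\xi|^2/2}e^{i(x,\xi)}$, use the explicit Fourier transform of a Gaussian, and pass to the limit $\varepsilon\to0^+$ by dominated convergence.

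Finally I would combine the two. Given $f,g\in\mathcal{S}(\mathbb{R}^{n+1})$, expand $\overline{\mathrm{F}(g)(\xi)}=\int_{\mathbb{R}^{n+1}}\overline{g(y)}\,e^{i(y,\xi)}\,dy$ and interchange the order of integration (justified by Fubini):
\[
\int_{\mathbb{R}^{n+1}} \mathrm{F}(f)(\xi)\,\overline{\mathrm{F}(g)(\xi)}\,d\xi
=\int_{\mathbb{R}^{n+1}}\overline{g(y)}\,\Bigl(\int_{\mathbb{R}^{n+1}}\mathrm{F}(f)(\xi)\,e^{i(y,\xi)}\,d\xi\Bigr)\,dy
=(2\pi)^{n+1}\int_{\mathbb{R}^{n+1}}\overline{g(y)}\,\mathcal{R}[\mathrm{F}(f)](y)\,dy,
\]
and the inner transform equals $f(y)$ by the inversion theorem of the second step. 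Dividing by $(2\pi)^{n+1}$ gives the asserted identity.

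The only genuinely non-formal ingredient is the Fourier inversion theorem used in the second step; steps one and three are pure applications of Fubini's theorem. I do not expect any obstacle coming from the paravector notation, since the identity concerns the scalar pairing $(x,\xi)=\sum_{j=0}^{n}x_j\xi_j$ and ordinary Lebesgue integration over $\mathbb{R}^{n+1}$, so the Clifford-algebra-valued case follows componentwise from the scalar one.
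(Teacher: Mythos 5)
Your proof is correct: with the paper's normalisation (Definitions \ref{Fourier} and \ref{Fourier2}) the combination of the multiplication formula, Fourier inversion on $\mathcal{S}(\mathbb{R}^{n+1})$, and Fubini yields exactly the stated identity with the factor $(2\pi)^{-(n+1)}$ on the frequency side. The paper gives no proof of Theorem \ref{Plan} at all — it is quoted as a classical fact — so your first option (citing a standard reference) coincides with what the authors do, and your self-contained three-step argument is the standard textbook derivation.
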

}
\begin{remark}
Using the Plancherel's theorem it is possible to obtain
\begin{equation}
\label{new3}
\int_{\mathbb{R}^{n+1}} \mathcal{R}(f)(x) \overline{\mathcal{R}(g(x))} dx= \frac{1}{(2 \pi)^{n+1}}\int_{\mathbb{R}^{n+1}} f(\xi) \overline{g(\xi)} d \xi. .
\end{equation}
\end{remark}
In the sequel we will need this result.
\begin{theorem} \cite[Sect. B.5]{G}
\label{rad}
Let $f(|\underline{x}|)$ be a radial function in $\color{black}{\mathcal{S}( \mathbb{R}^n)}$ with $n \geq 2$. Then the Fourier transform of $f$ is also radial and has the form
$$ \hat{f}(|\underline{\xi}|)=(2 \pi)^{\frac{n}{2}} | \underline{\xi}|^{- \frac{n-2}{2}} \int_{0}^{\infty} J_{\frac{n-2}{2}}(|\underline{\xi}| r) r^{\frac{n}{2}} f(r) \, dr,$$
where $r=|\underline{x}|$ and $ J_{\frac{n-2}{2}}$ are the Bessel functions.
\end{theorem}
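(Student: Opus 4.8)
The plan is to reduce the Fourier integral to a one-dimensional one by passing to polar coordinates, and then to recognize the resulting radial integral as a Hankel transform by means of the classical Poisson integral representation of the Bessel functions.

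First I would observe that $\hat f$ is automatically radial: if $R\in O(n)$ then $\mathrm{F}[f\circ R](\xi)=\hat f(R\xi)$, and since $f$ is radial we have $f\circ R=f$, hence $\hat f(R\xi)=\hat f(\xi)$ for every $R\in O(n)$, so $\hat f$ depends only on $|\underline\xi|=:\rho$. Then, writing $x=r\omega$ with $r=|\underline x|$ and $\omega\in S^{n-1}$, and using Fubini (legitimate since $f\in\mathcal S(\rr^n)$), I would rewrite
\[
\hat f(\rho)=\int_0^\infty f(r)\,r^{n-1}\Big(\int_{S^{n-1}} e^{-ir\rho\,\omega\cdot e}\,d\sigma(\omega)\Big)\,dr ,
\]
where $e\in S^{n-1}$ is any fixed unit vector; the inner integral is rotation invariant, so we may assume $\underline\xi=\rho e$.

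The core of the argument, and the step I regard as the main obstacle, is the spherical integral identity
\[
\int_{S^{n-1}} e^{-it\,\omega\cdot e}\,d\sigma(\omega)=(2\pi)^{n/2}\,t^{-\frac{n-2}{2}}\,J_{\frac{n-2}{2}}(t),\qquad t>0 .
\]
To establish it I would foliate $S^{n-1}$ by the slices $\{\omega:\ \omega\cdot e=\cos\theta\}$, each of which is an $(n-2)$-dimensional sphere of radius $\sin\theta$, so that $d\sigma=\sin^{n-2}\theta\,d\theta\,d\sigma_{S^{n-2}}$; with $s=\cos\theta$ the left-hand side becomes $|S^{n-2}|\int_{-1}^{1}e^{-its}(1-s^2)^{\frac{n-3}{2}}\,ds$, which, being unchanged under $s\mapsto -s$, coincides up to an explicit constant with the Poisson representation
\[
J_\nu(t)=\frac{(t/2)^\nu}{\Gamma(\nu+\tfrac12)\,\Gamma(\tfrac12)}\int_{-1}^{1}e^{its}(1-s^2)^{\nu-\frac12}\,ds ,\qquad \nu=\tfrac{n-2}{2},
\]
and a short computation with $\Gamma(\tfrac12)=\sqrt\pi$ and $|S^{n-2}|=2\pi^{(n-1)/2}/\Gamma(\tfrac{n-1}{2})$ pins the constant down as $(2\pi)^{n/2}$.

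Finally, substituting this identity with $t=r\rho$ into the polar-coordinate expression and collecting the powers of $r$ and $\rho$ (using $n-1-\tfrac{n-2}{2}=\tfrac n2$) yields
\[
\hat f(\rho)=(2\pi)^{n/2}\,\rho^{-\frac{n-2}{2}}\int_0^\infty J_{\frac{n-2}{2}}(\rho r)\,r^{n/2}\,f(r)\,dr ,
\]
which is precisely the assertion. An alternative to the slicing step would be an induction on the dimension $n$ exploiting the Bessel recurrence relations, but the Poisson-representation route looks most economical, and since $f\in\mathcal S(\rr^n)$ all interchanges of integration and all convergence questions are harmless.
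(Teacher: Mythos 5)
Your argument is correct: the constants check out (with $\lvert S^{n-2}\rvert=2\pi^{(n-1)/2}/\Gamma(\tfrac{n-1}{2})$ and the Poisson representation at $\nu=\tfrac{n-2}{2}$ one indeed gets the factor $(2\pi)^{n/2}$, and $n-1-\tfrac{n-2}{2}=\tfrac n2$ gives the stated power of $r$), and the hypotheses $f\in\mathcal S(\rr^n)$, $n\ge 2$ justify all interchanges and the use of the Poisson formula ($\nu>-\tfrac12$). The paper does not prove this statement but cites it from Grafakos, Appendix B.5, and your polar-coordinates-plus-Poisson-representation derivation is essentially the same standard argument used there, so there is nothing further to reconcile.
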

\color{black}{\begin{remark}(See \cite{O1})
The formula in Theorem \ref{rad} is also valid for all functions
$$ f \in L^1(\mathbb{R}^{n}) \cap L^{2}(\mathbb{R}^{n}).$$
\end{remark}}
\color{black}{ Now, we start our computations.
\begin{theorem}
\label{TCR1}
Let us assume $s_0 \in \mathbb{R}$ and $x \in \mathbb{R}^{n+1}$. If we consider the slice monogenic Cauchy kernels $S_L^{-1}(s_0,x)$ and $S_R^{-1}(s_0,x)$ written in form II (see Definition \ref{FORMSCK}) then their Fourier transforms with respect to $x$ are equal and given by
$$
\mathrm{F}[S_L^{-1}(s, \cdot)](\xi)=\mathrm{F}[S_R^{-1}(s_0, \cdot)](\xi)= c_n \frac{\bar{\xi}}{(\xi_0^2+| \underline{\xi}|^2)^{\frac{n+1}{2}}} e^{-is_0 \xi_0}
,\ \ \ \
\xi_0+\underline{\xi}\not=0
$$
where 
$$
c_n:=i 2^n \pi^{\frac{n+1}{2}} \Gamma \left( \frac{n+1}{2} \right).
$$
Moreover, if $s=s_0+\underline{s}\in \mathbb{R}^{n+1}$ is a paravector the term $e^{-is_0 \xi_0}$ extends to the intrinsic entire slice monogenic function $e^{-is \xi_0}$
and we have the Fourier transforms of the Cauchy kernels:
\begin{equation}
\label{new1}
\mathrm{F}[S_L^{-1}(s, \cdot)](\xi)
= c_n\  \frac{\bar{\xi}}{(\xi_0^2+| \underline{\xi}|^2)^{\frac{n+1}{2}}} \ e^{-is \xi_0},\ \ \ \
\xi_0+\underline{\xi}\not=0
\end{equation}
and
\begin{equation}
\label{new2}
\mathrm{F}[S_R^{-1}(s, \cdot)](\xi)= c_n\ e^{-is \xi_0}\ \frac{\bar{\xi}}{(\xi_0^2+| \underline{\xi}|^2)^{\frac{n+1}{2}}},\ \ \ \
\xi_0+\underline{\xi}\not=0.
\end{equation}
The extension $\mathrm{F}[S_L^{-1}(s, \cdot)](\xi)$ is right slice monogenic in $s$, while
$\mathrm{F}[S_R^{-1}(s, \cdot)](\xi)$ is right slice monogenic in $s$.
\end{theorem}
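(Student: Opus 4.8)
The plan is to compute $\mathrm{F}[S_L^{-1}(s_0,\cdot)](\xi)$ directly from the definition when $s_0\in\mathbb{R}$, exploiting the fact that in form~II the kernel separates nicely into a Clifford-algebra-valued numerator times a scalar radial denominator. First I would write
\[
S_L^{-1}(s_0,x)=(s_0-\bar x)\bigl((x_0-s_0)^2+|\underline{x}|^2\bigr)^{-1}
=\bigl((s_0-x_0)+\underline{x}\bigr)\bigl((x_0-s_0)^2+|\underline{x}|^2\bigr)^{-1},
\]
so that, after the change of variables $y_0=x_0-s_0$, the $e^{-is_0\xi_0}$ factor comes out of the integral and what remains is the Fourier transform over $\mathbb{R}^{n+1}$ of
$\bigl(-y_0+\underline{y}\bigr)(y_0^2+|\underline{y}|^2)^{-1}$, i.e. of $\overline{y}/|y|^2$ up to sign. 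The scalar-component part $-y_0(y_0^2+|\underline y|^2)^{-1}$ is odd in $y_0$ and even in $\underline y$, while each component $y_j(y_0^2+|\underline y|^2)^{-1}$ is odd in $y_j$; in both cases the transform is (a constant times) the Riesz-type kernel and produces a factor $\xi_j$ in the numerator. The key computational input is the classical formula for the Fourier transform of $|y|^{-(n-1)}$ on $\mathbb{R}^{n+1}$ — namely $\mathrm{F}[|y|^{-(n-1)}](\xi)=C\,|\xi|^{-2}$ with an explicit Gamma-function constant — which I would either invoke directly or derive via Theorem~\ref{rad} using the Bessel integral $\int_0^\infty J_{\nu}(ar)r^{-\nu}\,dr$-type identities. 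Differentiating $|y|^{-(n-1)}$, or equivalently writing $\overline y/|y|^2 = -\tfrac{1}{n-1}\,\overline{\nabla}\,|y|^{-(n-1)}$ (with $\overline\nabla=\partial_{x_0}-\sum e_j\partial_{x_j}$), converts the numerator $\overline y$ into $\overline\xi$ under the transform and raises the power in the denominator to $(\xi_0^2+|\underline\xi|^2)^{(n+1)/2}$, yielding exactly the claimed form with constant $c_n=i2^n\pi^{(n+1)/2}\Gamma(\tfrac{n+1}{2})$.

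For $S_R^{-1}$ in form~II the numerator $s_0-\bar x$ sits on the \emph{right} of the scalar factor; since for real $s_0$ the denominator $(s_0-x_0)^2+|\underline x|^2$ is real and hence central, $S_R^{-1}(s_0,x)=S_L^{-1}(s_0,x)$ pointwise, so the two Fourier transforms coincide — this is the ``are equal'' assertion. The extension to paravector $s=s_0+\underline s$ is then a matter of slice-monogenic continuation: the right-hand side $c_n\,\overline\xi\,(\xi_0^2+|\underline\xi|^2)^{-(n+1)/2}e^{-is\xi_0}$ is, for each fixed $\xi$ with $\xi_0+\underline\xi\neq 0$, obtained from the real-variable answer by replacing $e^{-is_0\xi_0}$ with the entire intrinsic slice monogenic function $e^{-is\xi_0}$ (here $i=\sqrt{-1}$ is the commuting complex unit, so this is genuinely the slice extension in the variable $s$, multiplied on the left, resp. right, by the Clifford constant $c_n\overline\xi(\cdots)^{-(n+1)/2}$). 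Since multiplication by a fixed Clifford number on one side preserves slice monogenicity, $\mathrm{F}[S_L^{-1}(s,\cdot)](\xi)$ and $\mathrm{F}[S_R^{-1}(s,\cdot)](\xi)$ are right slice monogenic in $s$, and by the identity theorem for slice monogenic functions this extension is the unique one agreeing with the computed transform on the real axis; one checks it is consistent with the Cauchy-kernel identities of Proposition~\ref{secondAA} and Definition~\ref{FORMSCK}.

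The main obstacle I expect is bookkeeping of the normalizing constant $c_n$: extracting the precise value $i2^n\pi^{(n+1)/2}\Gamma(\tfrac{n+1}{2})$ requires careful tracking of the $(2\pi)$-powers in the chosen convention of Definition~\ref{Fourier}, the factor of $i$ produced by each $\partial_{x_j}$ acting under the transform, the constant in the Fourier transform of $|y|^{-(n-1)}$ on $\mathbb{R}^{n+1}$, and the factor $-1/(n-1)$ from writing $\overline y/|y|^2$ as a gradient. A secondary technical point is that $S_L^{-1}(s_0,\cdot)$ is not Schwartz — it is only $O(|x|^{-1})$ at infinity and has a singularity on the sphere $\{x_0=s_0,\ |\underline x|=0\}$, i.e. at a single point — so the Fourier transform must be understood in the tempered-distribution sense; the computation via radial reduction (Theorem~\ref{rad}, in the $L^1\cap L^2$ form noted in the remark, applied after peeling off the $y_0$ variable or after a suitable regularization $|y|^{-(n-1)+\epsilon}$ and letting $\epsilon\to 0$) handles this, but the justification of interchanging limits and the distributional identities should be stated carefully rather than glossed over.
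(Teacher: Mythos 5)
Your overall strategy -- translate to remove $s_0$, reduce to the Fourier transform of $\bar y/|y|^2$ on $\mathbb{R}^{n+1}$ in the tempered-distribution sense, and read off the answer from the classical transform of radial powers -- is a legitimate alternative to the paper's route (the paper instead pairs against Schwartz test functions, performs the $x_0$-transform first to produce the Poisson-type factor $\frac{\pi}{|\underline{x}|}e^{-|\underline{x}||\xi_0|}$, and then applies Theorem \ref{rad} together with the Bessel integrals 6.623(1)--(2) of \cite{GR}). However, the identity on which your computation pivots is false: with $\overline{\nabla}=\partial_{y_0}-\sum_{j=1}^n e_j\partial_{y_j}$ one has $-\frac{1}{n-1}\overline{\nabla}\,|y|^{-(n-1)}=\bar y\,|y|^{-(n+1)}$, not $\bar y\,|y|^{-2}$; the correct primitive of $\bar y/|y|^{2}$ is $\overline{\nabla}\log|y|$. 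Taken literally, your plan (write $\bar y/|y|^2$ as a $y$-derivative of $|y|^{-(n-1)}$, use $\mathrm{F}[|y|^{-(n-1)}]=C|\xi|^{-2}$, convert $\overline{\nabla}$ into a factor $\pm i\bar\xi$) outputs a multiple of $\bar{\xi}\,(\xi_0^2+|\underline{\xi}|^2)^{-1}$ -- i.e.\ the transform of the $\mathcal{F}_n$-kernel of Theorem \ref{TCR2} -- rather than the claimed $\bar{\xi}\,(\xi_0^2+|\underline{\xi}|^2)^{-\frac{n+1}{2}}$, contradicting your own assertion that the power in the denominator is raised to $\frac{n+1}{2}$. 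So as written the key step fails.

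The fix keeps your architecture but moves the differentiation to the frequency side: use $\mathrm{F}[y_j f(y)](\xi)=i\,\partial_{\xi_j}\mathrm{F}[f](\xi)$ with the scalar input $\mathrm{F}\bigl[|y|^{-2}\bigr](\xi)=2^{n-1}\pi^{\frac{n+1}{2}}\Gamma\left(\frac{n-1}{2}\right)|\xi|^{-(n-1)}$ (the Riesz formula with exponent $2$ on $\mathbb{R}^{n+1}$, valid for $n\geq 2$; it is the inversion-dual of the formula you quoted). Then $i\,\partial_{\xi_j}|\xi|^{-(n-1)}=-i(n-1)\,\xi_j\,|\xi|^{-(n+1)}$ raises the power as desired, and $(n-1)\Gamma\left(\frac{n-1}{2}\right)=2\,\Gamma\left(\frac{n+1}{2}\right)$ produces exactly $c_n$; combined with the sign coming from $s_0-\bar x=-\bar y$ after the translation, this recovers \eqref{new1}. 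Your remaining points -- the pointwise equality $S_L^{-1}(s_0,\cdot)=S_R^{-1}(s_0,\cdot)$ for real $s_0$ because the denominator is real and central, the distributional interpretation of the transform, and the right slice monogenic extension in $s$ via the entire intrinsic function $e^{-is\xi_0}$ and the identity principle -- are sound and agree with the paper.
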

\begin{proof}
In the following proof we always work with  $s=s_0\in \mathbb{R}$ since we have
$$
S_L^{-1}(s,x)=(s-\bar x)(s^2-2\Re (x) s+|x|^2)^{-1}=(s^2-2\Re (x)s+|x|^2)^{-1}(s-\bar x)
=S_R^{-1}(s,x).
$$
The extension from $s=s_0$ to $s=s_0+\underline{s}$ is immediate.
So in our computations, we set
$$
S^{-1}(s,x):=S_L^{-1}(s,x)=S_R^{-1}(s,x)=\color{black}{\frac{s-x_0+ \underline{x}}{(s-x_0)^2+| \underline{x}|^2}}, \ \ \ s\in \mathbb{R}.
$$
We put $x= x_0 +\underline{x}$ and we recall the identification of the paravectors with
$(x_0,...,x_n)$.
\color{black}{Since the function $ S^{-1}(s,x)$ is not in $L^{1}(\mathbb{R}^{n+1})$ we have to perform the computations in the distributional sense. Firstly, we consider the following function
$$ f_{\underline{x}}(x_0):= \frac{s-x_0}{(s-x_0)^2+| \underline{x}|^2}.$$
Let $ \varphi \in \mathcal{S}(\mathbb{R}^{n+1})$. Then we have
\begin{eqnarray}
\label{Tao1}
\nonumber
\int_{\mathbb{R}^{n+1}} f_{\underline{x}}(x_0)\overline{\mathrm{F}(\varphi)(x)} dx &=& \int_{\mathbb{R}^{n+1}} f_{\underline{x}}(x_0)\overline{\mathrm{F}_0\left( \mathrm{F}_n \varphi_{\underline{x}}\right)(x_0)} d x_0 d \underline{x}\\
&=& \int_{\mathbb{R}^n} \int_{\mathbb{R}} \left( \mathrm{F}_0 f_{\underline{x}}\right)(\xi_0)\overline{\left( \mathrm{F}_n \varphi_{\underline{x}}\right)(\xi_0)} d \xi_0 d \underline{x},
\end{eqnarray}
where $ d \underline{x}= dx_1...dx_n$, $ \mathrm{F}_{0}$ is the Fourier transform with respect to the variable $x_0$ and $\mathrm{F}_{n}$ is the Fourier transform with respect to the other variables. Now, we compute $\mathrm{F}_0 f_{\underline{x}}(\xi_0)$. First of all we make the following change of variables $s+y=x_0$, thus by basic properties of the Fourier transform we have
\begin{eqnarray}
\label{Tao2}
\nonumber
\mathrm{F}_0 f_{\underline{x}}(\xi_0) &=& -\mathrm{F}_y[y(y^2+|\underline{x}|^2)^{-1}](\xi_0)e^{-is \xi_0}= -i \frac{d}{d \xi_0} \mathrm{F}_y \left( \frac{1}{|\underline{x}|^2+y^2} \right) (\xi_0)e^{-is \xi_0}\\
&=&-i \frac{\pi}{|\underline{x}|} \left( \frac{d}{d \xi_0} e^{-|\underline{x}| | \xi_0|} \right) e^{-is \xi_0}= i\frac{ \pi \xi_0}{| \xi_0|} e^{-|\underline{x}|| \xi_0|}  e^{-is\xi_{0}} .
\end{eqnarray}
Since $ \varphi_{\underline{x}}(x_0)=\varphi(x)$ and by Fubini's theorem we have
\begin{eqnarray*}
\int_{\mathbb{R}^{n+1}} f_{\underline{x}}(x_0)\overline{\mathrm{F}(\varphi)(x)} dx &=& \int_{\mathbb{R}} \int_{\mathbb{R}^n} \mathrm{F}_n \left( \mathrm{F}_0 f_{\underline{x}} \right) (\xi_0) \overline{\varphi(\xi)} d \underline{\xi} d \xi_0\\
&=& \int_{\mathbb{R}^{n+1}}\mathrm{F}_n \left( \mathrm{F}_0 f_{\underline{x}} \right) (\xi_0) \overline{\varphi(\xi)} d \xi.
\end{eqnarray*}
We finish this first part by computing $\mathrm{F}_n \left( \mathrm{F}_0 f_{\underline{x}} \right) (\xi_0)$. By Theorem \ref{rad} with $r = | \underline{x}|$ we have
$$ \mathrm{F}_n \left( \mathrm{F}_0 f_{\underline{x}} \right) (\xi_0)=(2 \pi)^{\frac{n}{2}}  | \underline{\xi}|^{- \frac{n-2}{2}} i \frac{ \pi \xi_0}{| \xi_0|}  e^{-is\xi_0} \int_0^\infty J_{\frac{n-2}{2}}( | \underline{\xi}|r) r^{\frac{n}{2}} e^{-r | \xi_0|} dr.$$
Now, we make another change of variables $t=| \underline{\xi}|r$.
$$
\mathrm{F}_n \left( \mathrm{F}_0 f_{\underline{x}} \right) (\xi_0)=(2 \pi)^{\frac{n}{2}}   | \underline{\xi}|^{-n} i \frac{\pi \xi_0}{| \xi_0|} e^{-is\xi_0} \int_0^\infty J_{\frac{n-2}{2}}( t) t^{\frac{n}{2}} e^{- \frac{t | \xi_0|}{| \underline{\xi}|}} dt.
$$
From \cite[formula 6.623(2)]{GR} we know that
$$
 \int_{0}^\infty e^{-at} t^{\nu+1} J_{\nu}(bt) dt= \frac{2a(2b)^{\nu} \Gamma \left( \nu+ \frac{3}{2} \right)}{\sqrt{\pi}(a^2+b^2)^{\nu + \frac{3}{2}}},\qquad  \nu >-1, \quad a>0, \quad b>0.
$$
In our case $a:= \frac{| \xi_0|}{| \underline{\xi}|}$, $ \nu:= \frac{n}{2}-1$, $ b:=1$. Since $n \geq 2$ all conditions on the parameters are satisfied.
Thus, we have
\begin{eqnarray*}
\mathrm{F}_n \left( \mathrm{F}_0 f_{\underline{x}} \right) (\xi_0) \! \! &=& \! \! \! \! \frac{i}{2}(2 \pi)^{\frac{n}{2}+1} \frac{\xi_0}{|\xi_0|}| \underline{\xi}|^{-n} e^{-is\xi_0}2 \frac{| \xi_0|}{|\underline{\xi}|}2^{\frac{n}{2}-1} \frac{\Gamma\left(\frac{n+1}{2} \right)}{\sqrt{\pi} \left( \frac{\xi_0^2}{| \underline{\xi}|^2}+1 \right)^{\frac{n+1}{2}}}\\
&=& \frac{i 2^n \pi^{\frac{n+1}{2}} \xi_0 |\underline{\xi}|^{-n-1}e^{-is \xi_0} |\underline{\xi}|^{n+1}\Gamma\left(\frac{n+1}{2} \right)}{(\xi^2_0+| \underline{\xi}|^2)^{\frac{n+1}{2}}}\\
&=& \frac{i 2^n \pi^{\frac{n+1}{2}} \Gamma\left(\frac{n+1}{2} \right) \xi_0  e^{-is \xi_0} }{(\xi^2_0+| \underline{\xi}|^2)^{\frac{n+1}{2}}}.
\end{eqnarray*}
Hence
\begin{equation}
\label{CR1}
\int_{\mathbb{R}^{n+1}} f_{\underline{x}}(x_0)\overline{\mathrm{F}(\varphi)(x)} dx= c_n \int_{\mathbb{R}^{n+1}} \frac{\xi_0}{(\xi^2_0+| \underline{\xi}|^2)^{\frac{n+1}{2}}}  e^{-is \xi_0} \overline{\varphi}(\xi) d \xi,
\end{equation}
where
$
c_n:=i 2^n \pi^{\frac{n+1}{2}} \Gamma \left( \frac{n+1}{2} \right).
$
Now, we compute the Fourier transform of
$$
h_{\underline{x}}(x_0):= \frac{\underline{x}}{(s-x_0)^2+| \underline{x}|^2} = \sum_{j=1}^n e_j x_j u_{\underline{x}}(x_0),
$$
where we have set
$$
u_{\underline{x}}(x_{0}):=\frac{1}{{(s-x_0)^2+| \underline{x}|^2}}.
$$
Let $ \varphi \in \mathcal{S}(\mathbb{R}^{n+1})$
\begin{eqnarray}
\int_{\mathbb{R}^{n+1}} h_{\underline{x}}(x_0) \overline{\mathrm{F}(\varphi)(x)} dx &=& \int_{\mathbb{R}^{n+1}} \sum_{j=1}^n e_j x_j u_{\underline{x}}(x_0) \overline{\mathrm{F}_0 \left( \mathrm{F}_n(\varphi_{\underline{x}}\right))(x_0)} d x_0 d \underline{x}\\
&=& \int_{\mathbb{R}^n} \int_{\mathbb{R}} \sum_{j=1}^n e_j x_j  \mathrm{F}_0(u_{\underline{x}})(\xi_0) \overline{\mathrm{F}_n \varphi_{\underline{x}}(\xi_0)} d \xi_0 d \underline{x}.
\end{eqnarray}
Now, we compute $\mathrm{F}_0(u_{\underline{x}})(\xi_0)$ using the following change of variable $s+y=x_0$
$$ \mathrm{F}_0(u_{\underline{x}})(\xi_0)= \mathrm{F}_{y} \left( \frac{1}{y^2+|\underline{x}|^2} \right)(\xi_0)e^{- is \xi_0}= \frac{\pi}{|\underline{x}|} e^{-|\underline{x}| | \xi_0|} e^{-is\xi_{0}}.
$$
By Fubini's theorem and the fact that $ \varphi_{\underline{x}}(x_0)= \varphi(x)$ we have
$$
 \int_{\mathbb{R}^{n+1}} h_{\underline{x}}(x_0) \overline{\mathrm{F}(\varphi)(x)} dx=\int_{\mathbb{R}} \int_{\mathbb{R}^n} \sum_{j=1}^n e_j \mathrm{F}_n \left( x_j
 \mathrm{F}_0(u_{\underline{x}}) \right) (\xi_0) \overline{ \varphi(\xi)}  d \underline{\xi} d \xi_0.
 $$
From the following basic property of the Fourier transform
$$
\mathrm{F}[xf(x)](\xi)= i \frac{d}{ d \xi} ( \mathrm{F} f(x))(\xi),\ \ \ x,  \xi\in \mathbb{R}
$$
 we get
$$
\int_{\mathbb{R}^{n+1}} h_{\underline{x}}(x_0)
\overline{\mathrm{F}(\varphi)(x)} dx=i \int_{\mathbb{R}} \int_{\mathbb{R}^n} \sum_{j=1}^n e_j \frac{\partial}{\partial \xi_j} \mathrm{F}_n \left(   \mathrm{F}_0(u_{\underline{x}}) \right) (\xi_0) \overline{ \varphi(\xi)}  d \underline{\xi} d \xi_0.
$$
We complete the proof of this theorem by computing $\mathrm{F}_n \left(   \mathrm{F}_0(u_{\underline{x}}) \right) (\xi_0) $. By Theorem \ref{rad}, with $r= | \underline{x}|$, we get
$$ \mathrm{F}_n \left(  \mathrm{F}_0(u_{\underline{x}}) \right) (\xi_0)=(2 \pi)^{\frac{n}{2}} \pi e^{-is \xi_0}  | \underline{\xi} |^{- \frac{n-2}{2}} \int_{0}^\infty J_{\frac{n-2}{2}}(| \underline{\xi}| r) r^{\frac{n}{2}-1} e^{-r| \xi_0|}    dr .$$
Now we put $t= r| \underline{\xi}|$
$$
\mathrm{F}_n   (\mathrm{F}_0(u_{\underline{x}})) (\xi_0)= \frac{1}{2}(2 \pi)^{\frac{n}{2}+1} e^{-is \xi_0}   | \underline{\xi} |^{1-n} \int_{0}^\infty J_{\frac{n-2}{2}}(t) t^{\frac{n}{2}-1} e^{- t \frac{| \xi_0|}{| \underline{\xi}|}}  dt .
$$
From \cite[formula 6.623 (1)]{GR} we know that
$$ \int_{0}^{\infty} e^{-at} t^{\nu} J_{\nu}(bt) dt= \frac{(2b)^\nu \Gamma \left( \nu + \frac{1}{2}\right)}{\sqrt{\pi}(a^2+b^2)^{\nu+ \frac{1}{2}}},\qquad \nu > - \frac{1}{2}, \quad a>0, \quad b>0.$$
Thus by putting $b:=1$, $ \nu:= \frac{n}{2}-1$ and $a:= \frac{| \xi_0|}{| \underline{\xi}|}$ we obtain
\begin{eqnarray*}
\label{CR2}
\nonumber
\mathrm{F}_n \left(   \mathrm{F}_0(u_{\underline{x}}) \right) (\xi_0) &=& \frac{1}{2}(2 \pi)^{\frac{n}{2}+1}  e^{-is \xi_0}  \left( \frac{|\underline{\xi}|^{1-n} 2^{\frac{n}{2}-1}\Gamma \left( \frac{n-1}{2} \right)}{\sqrt{\pi} \left( \frac{\xi_0^2}{|\underline{\xi}|^2}+1 \right)^{\frac{n-1}{2}}} \right)\\ \nonumber
&=&  2^{n-1} \pi^{\frac{n+1}{2}} e^{-is \xi_0} \Gamma \left( \frac{n-1}{2} \right) \left( \frac{|\underline{\xi}|^{1-n} |\underline{\xi}|^{n-1} }{(\xi_0^2+|\underline{\xi}|^2)^{\frac{n-1}{2}}} \right)\\
&=&  2^{n-1} \pi^{\frac{n+1}{2}} e^{-is \xi_0} \Gamma \left( \frac{n-1}{2} \right)\left( \frac{1}{(\xi_0^2+|\underline{\xi}|^2)^{\frac{n-1}{2}}} \right).
\end{eqnarray*}
We compute the derivative
\begin{equation}
\label{der}
\frac{\partial}{\partial \xi_j} \left( \frac{1}{(\xi_0^2+|\underline{\xi}|^2)^{\frac{n-1}{2}}} \right)= -\frac{\left( \frac{n-1}{2} \right) (\xi^2+| \underline{\xi}|^{2})^{\frac{n-3}{2}}2 \xi_j}{(\xi_0^2+|\underline{\xi}|^{2})^{n-1}}=- \frac{2 \xi_j \left( \frac{n-1}{2}\right)}{(\xi_0^2+|\underline{\xi}|^{2})^{\frac{n+1}{2}}}.
\end{equation}
Now, by using the following property of the Gamma function $ \Gamma(x+1)=x \Gamma(x)$, for $x>0$, we obtain
\begin{eqnarray*}
\sum_{j=1}^n e_j \frac{\partial}{\partial \xi_j} \mathrm{F}_n \left(   \mathrm{F}_0(u_{\underline{x}}) \right) (\xi_0) &=& - \frac{2^{n-1} \pi^{\frac{n+1}{2}} e^{-is \xi_0} \left( \frac{n-1}{2}  \right)\Gamma \left( \frac{n-1}{2} \right) 2 \sum_{j=1}^n e_j \xi_j}{(\xi_0^2+|\underline{\xi}|^{2})^{\frac{n+1}{2}}}\\
&=& - \frac{2^n \pi^{\frac{n+1}{2}} \Gamma \left( \frac{n+1}{2} \right) }{(\xi_0^2+|\underline{\xi}|^{2})^{\frac{n+1}{2}}} \underline{\xi}e^{-is \xi_0}.
\end{eqnarray*}
Hence we get
\begin{equation}
\label{CR3}
\int_{\mathbb{R}^{n+1}} h_{\underline{x}}(x_0) \overline{\mathrm{F}(\varphi)(x)} dx=-c_n \int_{\mathbb{R}^{n+1}} \frac{\underline{\xi}}{{(\xi_0^2+|\underline{\xi}|^{2})^{\frac{n+1}{2}}}}e^{-is \xi_0} \overline{ \varphi(\xi)}  d \xi ,
\end{equation}
where $c_n:=i 2^n \pi^{\frac{n+1}{2}} \Gamma \left( \frac{n+1}{2} \right) $.
Finally from \eqref{CR1} and \eqref{CR3} we get
\begin{eqnarray*}
\int_{\mathbb{R}^{n+1}} S^{-1}(s, x) \overline{\mathrm{F}(\varphi)(x)} dx &=&  \int_{\mathbb{R}^{n+1}} f_{\underline{x}}(x_0) \overline{\mathrm{F}(\varphi)(x)} dx-\int_{\mathbb{R}^{n+1}} h_{\underline{x}}(x_0) \overline{\mathrm{F}(\varphi)(x)} dx\\
&=& c_n \int_{\mathbb{R}^{n+1}}\frac{\bar{\xi}}{(\xi_0^2+| \underline{\xi}|^2)^{\frac{n+1}{2}}} e^{-is \xi_0}\overline{ \varphi(\xi)}  d \xi.
\end{eqnarray*}
This proves \eqref{new1} and \eqref{new2}, respectively.}

\medskip
\color{black}{We are now in the position
to observe that the term $e^{-is \xi_0}$, for $s=s_0$ extends to the entire intrinsic slice monogenic function $e^{-i(s_0+\underline{s}) \xi_0}$. So the function
$$
s_0\mapsto c_n \frac{\bar{\xi}}{(\xi_0^2+| \underline{\xi}|^2)^{\frac{n+1}{2}}} e^{-is_0 \xi_0}
$$
has the right slice monogenic extension in $s\in \mathbb{R}^{n+1}$
$$
\mathrm{F}[S_L^{-1}(s, \cdot)](\xi)
= c_n\  \frac{\bar{\xi}}{(\xi_0^2+| \underline{\xi}|^2)^{\frac{n+1}{2}}} \ e^{-is \xi_0}
$$
while the  right slice monogenic extension in $s\in \mathbb{R}^{n+1}$ is
$$
\mathrm{F}[S_R^{-1}(s, \cdot)](\xi)= c_n\ e^{-is \xi_0}\ \frac{\bar{\xi}}{(\xi_0^2+| \underline{\xi}|^2)^{\frac{n+1}{2}}}
$$
and this conclude the proof.}
\end{proof}

\section{The Fourier transform of the $F_n$-kernels}

Thanks to Theorem \ref{sceqianexponen} the $F_n$-kernels are
meaningful also with $n$ odd where we interpret the fractional powers of paravectors are
slice monogenic functions.

As we have shown when  $n$ be an odd number and $x$, $s\in \rr^{n+1}$,
for $s\not\in[x]$, the relations
$$
\Delta^{\frac{n-1}{2}}S_L^{-1}(s,x)
=\gamma_n(s-\bar x)(s^2-2{\rm Re}(x)s +|x|^2)^{-\frac{n+1}{2}},
$$
and
$$
\Delta^{\frac{n-1}{2}}S_R^{-1}(s,x)
=\gamma_n(s^2-2{\rm Re}(x)s +|x|^2)^{-\frac{n+1}{2}}(s-\bar x),
$$
where $\gamma_n$ are given by (\ref{gammn}) are obtained by a long, but direct computation.
We now let $n$ be any natural number and when $n$ is even we interpret the terms
$(s^2-2{\rm Re}(x)s +|x|^2)^{-\frac{n+1}{2}}$  as fractional power for $x$,
$s\in \rr^{n+1}$.
We define, for $s\not\in[x]$, the left $\mathcal{F}^L_n$-kernel as
$$
\mathcal{F}^L_n(s,x):
=\gamma_n(s-\bar x)(s^2-2{\rm Re}(x)s +|x|^2)^{-\frac{n+1}{2}},
$$
and the right $\mathcal{F}^R_n$-kernel as
$$
\mathcal{F}^R_n(s,x):
=\gamma_n(s^2-2{\rm Re}(x)s +|x|^2)^{-\frac{n+1}{2}}(s-\bar x),
$$
where $\gamma_n$, are given by (\ref{gammn}),
are now interpreted in terms of the Euler's Gamma function
$$
\gamma_n:= (-1)^{\frac{n-1}{2}} 2^{n-1} \left[\Gamma\left( \frac{n+1}{2} \right) \right]^2.
$$
We will compute the Fourier transforms of $\mathcal{F}_L(s,x)$ and $\mathcal{F}_R(s,x)$ with respect to $x$.

\begin{remark} Observe that when $s=s_0\in \mathbb{R}$, for $s\not\in[x]$, then we have
\[
\begin{split}
\mathcal{F}^L_n(s_0,x)
&=\gamma_n(s_0-\bar x)(s_0^2-2{\rm Re}(x)s_0 +|x|^2)^{-\frac{n+1}{2}}
\\
&
=\gamma_n(s_0^2-2{\rm Re}(x)s_0 +|x|^2)^{-\frac{n+1}{2}}(s_0-\bar x)
\\
&
=\mathcal{F}^R_n(s_0,x).
\end{split}
\]
So for simplicity in the following when $s=s_0\in \mathbb{R}$ we use the notation
$$
\mathcal{F}_n(s_0,x):=\mathcal{F}^L_n(s_0,x)=\mathcal{F}^R_n(s_0,x).
$$
\end{remark}

\begin{theorem}
\label{TCR2}
Let us assume $ x \in \mathbb{R}^{n+1}$ and $s$ a real number. The Fourier transform of $\mathcal{F}_n(s,x)$ with respect to $x$ is
$$\widehat{\mathcal{F}_n}(s,\xi)=k_n \frac{\bar{\xi}}{\xi^2_0+|\underline{\xi}|^2} e^{-is \xi_0},$$
where
$$
k_n:=i(-1)^{\frac{n-1}{2}}2^n  \pi^{\frac{n+1}{2}} \Gamma \left( \frac{n+1}{2} \right).
$$
Moreover, if $s=s_0+\underline{s}\in \mathbb{R}^{n+1}$ is a paravector the term $e^{-is_0 \xi_0}$ extends to the intrinsic entire slice monogenic function $e^{-is \xi_0}$
and we have the Fourier transforms of the kernels $\mathcal{F}^L_n$ and $\mathcal{F}^R_n$:
$$
\mathrm{F}[\mathcal{F}^L_n(s, \cdot)](\xi)
= k_n\  \frac{\bar{\xi}}{\xi_0^2+| \underline{\xi}|^2} \ e^{-is \xi_0},\ \ \ \
\xi_0+\underline{\xi}\not=0
$$
and
$$
\mathrm{F}[\mathcal{F}^R_n(s, \cdot)](\xi)= k_n\ e^{-is \xi_0}\ \frac{\bar{\xi}}{\xi_0^2+| \underline{\xi}|^2},\ \ \ \
\xi_0+\underline{\xi}\not=0.
$$
The extension $\mathrm{F}[\mathcal{F}^L_n(s, \cdot)](\xi)$ is right slice monogenic in $s$, while
$\mathrm{F}[\mathcal{F}^R_n(s, \cdot)](\xi)$ is right slice monogenic in $s$.

\end{theorem}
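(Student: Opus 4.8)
The plan is to mimic the structure of the proof of Theorem \ref{TCR1}, since the only difference between $\mathcal{F}_n(s_0,x)$ and $S^{-1}(s_0,x)$ is the exponent of the quadratic factor: we now have $(s_0^2-2\Re(x)s_0+|x|^2)^{-\frac{n+1}{2}}$ instead of $(\cdots)^{-1}$. As in Theorem \ref{TCR1}, first reduce to $s=s_0\in\mathbb{R}$, where after the shift $y=x_0-s_0$ we must compute the Fourier transform of $(y+\underline{x})\,(y^2+|\underline{x}|^2)^{-\frac{n+1}{2}}$, and the $e^{-is_0\xi_0}$ factor will come out by the translation property (extending to $e^{-is\xi_0}$ for paravector $s$ exactly as before, with the same slice-monogenicity remark). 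Split $\mathcal{F}_n(s_0,x)=\gamma_n\big(f_{\underline{x}}(x_0)-h_{\underline{x}}(x_0)\big)$ where now $f_{\underline{x}}(x_0)=y(y^2+|\underline{x}|^2)^{-\frac{n+1}{2}}$ (scalar part) and $h_{\underline{x}}(x_0)=\underline{x}(y^2+|\underline{x}|^2)^{-\frac{n+1}{2}}=\sum_j e_j x_j\,u_{\underline{x}}(x_0)$ with $u_{\underline{x}}(x_0)=(y^2+|\underline{x}|^2)^{-\frac{n+1}{2}}$, and work in the distributional sense against $\varphi\in\mathcal{S}(\mathbb{R}^{n+1})$ using the iterated Fourier transform $\mathrm{F}=\mathrm{F}_n\circ\mathrm{F}_0$.

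The key step is the analogue of the two displayed computations in Theorem \ref{TCR1}: first $\mathrm{F}_0$ in the variable $y$, then the radial $\mathrm{F}_n$ via Theorem \ref{rad} and the Bessel integral formulas \cite[6.623(1),(2)]{GR}. For $u_{\underline{x}}$, one needs $\mathrm{F}_y\big((y^2+|\underline{x}|^2)^{-\frac{n+1}{2}}\big)(\xi_0)$; this is a standard one-dimensional Fourier transform of a power of $y^2+a^2$, expressible through a Bessel function (or, combined with the subsequent $n$-dimensional radial transform, one may instead directly compute the full $(n+1)$-dimensional Fourier transform of $(|x|^2)^{-\frac{n+1}{2}}$-type kernels — indeed $\mathcal{F}_n(s_0,x)$ for $s_0=0$ is, up to the factor $\gamma_n$, the kernel $\bar x/|x|^{n+1}$, whose Fourier transform is classically $c\,\bar\xi/|\xi|^2$ type, which already explains the shape of the answer). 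After computing $\mathrm{F}_n(\mathrm{F}_0 u_{\underline{x}})(\xi_0)$ — which I expect to come out proportional to $(\xi_0^2+|\underline{\xi}|^2)^{-1/2}e^{-is_0\xi_0}$ — one recovers $h_{\underline{x}}$ by the identity $\mathrm{F}[x_jg](\xi)=i\partial_{\xi_j}\mathrm{F}[g](\xi)$ and differentiates, producing $\underline{\xi}(\xi_0^2+|\underline{\xi}|^2)^{-3/2}$... one must be careful: the target is $\bar\xi/(\xi_0^2+|\underline{\xi}|^2)$, i.e.\ exponent $1$, not $\frac{n+1}{2}$, so the powers of $|\underline{\xi}|$ arising from the change of variables $t=|\underline{\xi}|r$ in Theorem \ref{rad} must conspire to cancel down to this. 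Tracking these powers of $|\underline{\xi}|$ correctly, and keeping the Gamma-function bookkeeping straight so that the constant assembles into $k_n=i(-1)^{\frac{n-1}{2}}2^n\pi^{\frac{n+1}{2}}\Gamma\!\big(\frac{n+1}{2}\big)=\gamma_n\cdot(\text{something})$, is the main obstacle — it is the same kind of computation as in Theorem \ref{TCR1} but with the shifted Bessel order, so one should expect the scalar piece to use \cite[6.623(2)]{GR} with $\nu=\frac{n}{2}-1$ replaced appropriately and the vector piece to use \cite[6.623(1)]{GR}.

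Once the scalar contribution $\mathrm{F}_n(\mathrm{F}_0 f_{\underline{x}})(\xi_0)$ (giving the $\xi_0$-part) and the vector contribution $\sum_j e_j\partial_{\xi_j}\mathrm{F}_n(\mathrm{F}_0 u_{\underline{x}})(\xi_0)$ (giving the $-\underline{\xi}$-part) are both shown to equal $k_n/\gamma_n$ times $\xi_0(\xi_0^2+|\underline{\xi}|^2)^{-1}e^{-is_0\xi_0}$ and $-\underline{\xi}(\xi_0^2+|\underline{\xi}|^2)^{-1}e^{-is_0\xi_0}$ respectively, combining them via $\mathcal{F}_n=\gamma_n(f_{\underline{x}}-h_{\underline{x}})$ and using $\xi_0-\underline{\xi}=\bar\xi$ yields
$$
\int_{\mathbb{R}^{n+1}}\mathcal{F}_n(s_0,x)\overline{\mathrm{F}(\varphi)(x)}\,dx = k_n\int_{\mathbb{R}^{n+1}}\frac{\bar\xi}{\xi_0^2+|\underline{\xi}|^2}\,e^{-is_0\xi_0}\,\overline{\varphi(\xi)}\,d\xi,
$$
which is the claimed formula in the distributional sense. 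Finally, the extension from $s_0\in\mathbb{R}$ to a paravector $s=s_0+\underline{s}$, and the statements about right slice monogenicity in $s$, follow verbatim from the corresponding argument in Theorem \ref{TCR1}: $e^{-is_0\xi_0}$ is the restriction of the entire intrinsic slice monogenic function $e^{-is\xi_0}$, and left/right placement of this scalar-like factor relative to $\bar\xi$ gives the two kernels $\mathcal{F}^L_n$, $\mathcal{F}^R_n$. Alternatively — and this is the slicker route I would actually write — one can avoid redoing the Bessel integrals entirely by invoking Theorem \ref{sceqianexponen} together with Theorem \ref{Laplacian_comp} to write $\mathcal{F}_n(s_0,x)=\Delta^{\frac{n-1}{2}}S^{-1}(s_0,x)$ when $n$ is odd, apply $\mathrm{F}[\Delta^{\frac{n-1}{2}}g](\xi)=(-1)^{\frac{n-1}{2}}(\xi_0^2+|\underline{\xi}|^2)^{\frac{n-1}{2}}\mathrm{F}[g](\xi)$ to the formula in Theorem \ref{TCR1}, observe $c_n\cdot(-1)^{\frac{n-1}{2}}=k_n$ and that $(\xi_0^2+|\underline{\xi}|^2)^{\frac{n-1}{2}-\frac{n+1}{2}}=(\xi_0^2+|\underline{\xi}|^2)^{-1}$, and then extend the resulting identity in $n$ from odd integers to all $n$ by analytic continuation in the parameter $\frac{n+1}{2}$ (both sides being analytic in that parameter on the relevant domain) — but since the theorem is stated for $s$ real and general $n$, giving the direct Bessel computation as above is the self-contained option.
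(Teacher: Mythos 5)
Your overall skeleton (distributional pairing against $\varphi\in\mathcal S(\mathbb{R}^{n+1})$, splitting into a scalar part and a vector part handled through $\mathrm{F}[x_jg](\xi)=i\partial_{\xi_j}\mathrm{F}[g](\xi)$, the translation factor $e^{-is\xi_0}$, and the slice monogenic extension in $s$ at the end) coincides with the paper's, but the core Fourier computation is left open, and the route you indicate for it would not go through as cited. You propose to keep the order of Theorem \ref{TCR1}: first $\mathrm{F}_0$ in $x_0$, then the radial transform via Theorem \ref{rad} and \cite[formulas 6.623(1),(2)]{GR}. That order works for exponent $1$ only because $\mathrm{F}_y\big((y^2+|\underline{x}|^2)^{-1}\big)$ is the elementary Poisson kernel $\tfrac{\pi}{|\underline{x}|}e^{-|\underline{x}||\xi_0|}$, and 6.623 integrates $J_\nu$ against an exponential. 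For exponent $\tfrac{n+1}{2}$ (and $n$ even is the case that matters here), $\mathrm{F}_y\big((y^2+|\underline{x}|^2)^{-\frac{n+1}{2}}\big)$ is a nonelementary modified Bessel function of order $n/2$, so the subsequent radial integral is a product integral of $J$ and $K$ Bessel functions that 6.623 does not cover and that you never evaluate --- precisely the step you flag as ``the main obstacle.'' Moreover your guessed intermediate, $\mathrm{F}_n(\mathrm{F}_0u_{\underline{x}})\propto(\xi_0^2+|\underline{\xi}|^2)^{-1/2}$, is inconsistent with the target: one $\partial_{\xi_j}$ would produce the exponent $-3/2$, not the required $-1$. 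Since $\tfrac{n+1}{2}$ is half the dimension of $\mathbb{R}^{n+1}$ (the critical case), the correct intermediate object is logarithmic, not a negative power.

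The paper resolves exactly this point by reversing the order of integration: it first computes the radial integral in $r=|\underline{x}|$ using \cite[formula 6.565(3)]{GR}, whose exponent $\nu+\tfrac32$ with $\nu=\tfrac n2-1$ matches $\tfrac{n+1}{2}$ and yields the Poisson-type kernel $\frac{\pi^{\frac{n+1}{2}}}{\Gamma(\frac{n+1}{2})}\,\frac{e^{-|\underline{\xi}|\,|s-x_0|}}{|s-x_0|}$ (times $s-x_0$ for the scalar part); the remaining one-dimensional transform in $x_0$ is then obtained from $\int_0^\infty\cos(yu)\,y^{-1}e^{-ay}\,dy=-\tfrac12\log(a^2+u^2)$ (\cite[formula 3.2, p.\ 11]{O}), and differentiating $-\log(\xi_0^2+|\underline{\xi}|^2)$ in $\xi_0$, respectively $\xi_j$, produces $\bar\xi/(\xi_0^2+|\underline{\xi}|^2)$ together with the constant $k_n$. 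Two further cautions: your parenthetical shortcut via the classical transform of $\bar x/|x|^{n+1}$ explains the shape of the answer but does not deliver the constant $k_n$, which is the content actually needed later in Theorem \ref{TCR3}; and your ``slicker route'' by analytic continuation from odd $n$ is not meaningful as stated, because the dimension of the space over which the Fourier transform is taken changes with $n$ --- for a fixed even $n$ there are no odd cases in $\mathbb{R}^{n+1}$ to continue from, so this would require a genuine dimensional-continuation argument that is not provided.
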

\begin{proof}
We observe that the Fourier transform of the kernel $\mathcal{F}_n(s, \cdot)$ is meaningful
and from similar computations at the beginning of Theorem \ref{TCR1} we obtain
\[
\begin{split}
\widehat{\mathcal{F}_n}(s,\xi)  &=   \gamma_n s \int_{\mathbb{R}} e^{-ix_0 \xi_0}  (2 \pi)^{\frac{n}{2}} | \underline{\xi} |^{- \frac{n-2}{2}} \! \! \int_{0}^{\infty} (s^2-2x_0s+x_0^2+r^2)^{-\frac{n+1}{2}} J_{\frac{n-2}{2}}(| \underline{\xi}| r) r^{\frac{n}{2}}  dr d x_0
\\
&
 - \gamma_n \int_{\mathbb{R}} x_0 e^{-ix_0 \xi_0} (2 \pi)^{\frac{n}{2}} | \underline{\xi} |^{- \frac{n-2}{2}} \! \! \int_{0}^{\infty} (s^2-2x_0s+x_0^2+r^2)^{-\frac{n+1}{2}} J_{\frac{n-2}{2}}(| \underline{\xi}| r) r^{\frac{n}{2}}  dr d x_0
 \\
&
+ \gamma_n i \sum_{j=1}^n e_j \! \! \int_{\mathbb{R}} e^{-ix_0 \xi_0}   \frac{\partial}{\partial \xi_j} \biggl( (2 \pi)^{\frac{n}{2}} | \underline{\xi} |^{- \frac{n-2}{2}} \! \! \int_{0}^{\infty} (s^2-2x_0s+x_0^2+r^2)^{-\frac{n+1}{2}}  J_{\frac{n-2}{2}}(| \underline{\xi}| r) r^{\frac{n}{2}}  dr d x_0\biggl)
\\
&:= \gamma_n\left(\widehat{\mathcal{F}_{n,1}}(s, \xi)+\widehat{\mathcal{F}_{n,2}}(s, \xi)+\widehat{\mathcal{F}_{n,3}}(s, \xi) \right).
\end{split}
\]
Now, we focus on the first two members
\begin{eqnarray*}
\widehat{\mathcal{F}_{n,1}}(s, \xi)&+&\widehat{\mathcal{F}_{n,2}}(s, \xi)
\\
&=& s \int_{\mathbb{R}} e^{-ix_0 \xi_0}  (2 \pi)^{\frac{n}{2}} | \underline{\xi} |^{- \frac{n-2}{2}} \! \! \int_{0}^{\infty} (s^2-2x_0s+x_0^2+r^2)^{-\frac{n+1}{2}}  J_{\frac{n-2}{2}}(| \underline{\xi}| r) r^{\frac{n}{2}}  dr d x_0
\\
&-& \int_{\mathbb{R}} x_0 e^{-ix_0 \xi_0} (2 \pi)^{\frac{n}{2}} | \underline{\xi} |^{- \frac{n-2}{2}}   \int_{0}^{\infty} (s^2-2x_0s+x_0^2+r^2)^{-\frac{n+1}{2}} J_{\frac{n-2}{2}}(| \underline{\xi}| r) r^{\frac{n}{2}}  dr d x_0
\\
&=& (2 \pi)^{\frac{n}{2}} | \underline{\xi} |^{- \frac{n-2}{2}}  \int_{\mathbb{R}} (s-x_0)e^{-i x_0 \xi_0} \int_{0}^\infty [(s-x_0)^2+r^2]^{- \frac{n+1}{2}} J_{\frac{n-2}{2}}(| \underline{\xi}| r) r^{\frac{n}{2}}  dr d x_0.
\end{eqnarray*}
Firstly, we solve the integral in the variable $r$. From \cite[formula 6.565 (3)]{GR} we know that
\begin{equation}
\label{Inte}
\int_{0}^\infty x^{\nu+1} (x^2+a^2)^{-\nu- \frac{3}{2}}J_{\nu}(bx) dx= \frac{b^\nu \sqrt{\pi}}{2^{\nu +1} |a| e^{|a| b} \Gamma \left( \nu + \frac{3}{2} \right)} \qquad b>0, \quad \nu >-1.
\end{equation}
In our case $b:= | \underline{\xi}|$, $ \nu= \frac{n}{2}-1$ and $a=s-x_0$. Then
\begin{eqnarray*}
\widehat{\mathcal{F}_{n,1}}(s, \xi)+\widehat{\mathcal{F}_{n,2}}(s, \xi) &=& \frac{(2 \pi)^{\frac{n}{2}}2^{- \frac{n}{2}} \sqrt{\pi} | \underline{\xi} |^{- \frac{n}{2}+1}| \underline{\xi} |^{\frac{n}{2}-1}}{\Gamma \left( \frac{n+1}{2} \right)} \int_{\mathbb{R}} e^{-ix_0 \xi_0} \frac{(s-x_0)}{|s-x_0|} e^{-| \underline{\xi} ||s-x_0|} dx_0\\
&=&\frac{\pi^{\frac{n+1}{2}}}{\Gamma \left( \frac{n+1}{2} \right)} \int_{\mathbb{R}} e^{-ix_0 \xi_0} \frac{(s-x_0)}{|s-x_0|} e^{-| \underline{\xi} ||s-x_0|} d x_0.
\end{eqnarray*}
Now, we put $s+y=x_0$. Thus we have
$$ \widehat{\mathcal{F}_{n,1}}(s, \xi)+\widehat{\mathcal{F}_{n,2}}(s, \xi) =-\frac{\pi^{\frac{n+1}{2}}}{\Gamma \left( \frac{n+1}{2} \right)} e^{-is \xi_0}\int_{\mathbb{R}} e^{-iy \xi_0} \frac{y}{|y|} e^{-| \underline{\xi} ||y|} dy.$$
From \cite[formula 3.2 pag 11]{O} we know that
$$ \int_{0}^\infty \cos(x u) \frac{e^{-ax}}{x} dx=- \frac{\log(a^2+u^2)}{2}.$$
Thus by the Euler's formula we get
\begin{equation}
\label{inte2}
\mathrm{F} \left(\frac{1}{|y|} e^{-| \underline{\xi}| |y|} \right) (\xi_0)= 2 \int_{0}^\infty \cos(y \xi_0)\frac{1}{y}e^{-| \underline{\xi}|y} d y=-\log(\xi_0^2+| \underline{\xi}|^2).
\end{equation}
Using basic properties of the Fourier transform we obtain
\begin{eqnarray*}
\int_{\mathbb{R}} e^{-iy \xi_0} \frac{y}{|y|} e^{-| \underline{\xi} ||y|} dy &=&
 \mathrm{F} \left(\frac{y}{|y|} e^{-| \underline{\xi}| |y|} \right) (\xi_0)=i \frac{d}{d \xi_0} \mathrm{F}\left(\frac{1}{|y|} e^{-| \underline{\xi} ||y|} \right) (\xi_0)\\
&=& -i \frac{d}{d \xi_0} \left( \log(\xi_0^2+| \underline{\xi}|^2) \right)=- \frac{2i \xi_0}{\xi_0^2+| \underline{\xi}|^2}.
\end{eqnarray*}
Therefore
$$  \widehat{\mathcal{F}_{n,1}}(s, \xi)+\widehat{\mathcal{F}_{n,2}}(s, \xi)= \frac{2i \pi^{\frac{n+1}{2}} e^{-is \xi_0} \xi_0}{\Gamma\left( \frac{n+1}{2} \right) ( \xi_0^2+| \underline{\xi}|^2)}.$$
Finally, we multiply by $ \gamma_n:= (-1)^{\frac{n-1}{2}} 2^{n-1} \left[\Gamma\left( \frac{n+1}{2} \right) \right]^2$
\begin{equation}
\label{CR4}
\gamma_n (\widehat{\mathcal{F}_{n,1}}(s, \xi)+\widehat{\mathcal{F}_{n,2}}(s, \xi))=\frac{i(-1)^{\frac{n-1}{2}} 2^n  \pi^{\frac{n+1}{2}}  \Gamma\left( \frac{n+1}{2} \right)}{ \xi_0^2+| \underline{\xi}|^2}\xi_0e^{-is \xi_0}.
\end{equation}
Now we compute $ \widehat{\mathcal{F}_{n,3}}(s, \xi)$.
\begin{eqnarray*}
\widehat{\mathcal{F}_{n,3}}(s, \xi) =  i (2 \pi)^{\frac{n}{2}} \sum_{j=1}^n e_j     \frac{\partial}{\partial \xi_j} \biggl(  | \underline{\xi} |^{- \frac{n-2}{2}} \int_{\mathbb{R}} e^{-ix_0 \xi_0} \int_{0}^{\infty} [(s-x_0)^2+r^2]^{-\frac{n+1}{2}}  J_{\frac{n-2}{2}}(| \underline{\xi}| r) r^{\frac{n}{2}}  dr d x_0\biggl).
\end{eqnarray*}
As before we compute the integral in the variable $r$ using \eqref{Inte} with $b:= | \underline{\xi}|$, $ \nu= \frac{n}{2}-1$ and $a=s-x_0$. Thus we have
\begin{eqnarray*}
\widehat{\mathcal{F}_{n,3}}(s, \xi)&=& \frac{i(2 \pi)^{\frac{n}{2}}2^{- \frac{n}{2}} \sqrt{\pi}}{\Gamma \left( \frac{n+1}{2} \right)} \sum_{j=1}^n e_j \frac{\partial}{\partial \xi_j} \left( | \underline{\xi}|^{- \frac{n}{2}+1}| \underline{\xi}|^{ \frac{n}{2}-1} \int_{\mathbb{R}}e^{-ix_0 \xi_0}|s-x_0|^{-1} e^{-| \underline{\xi}||s-x_0|}d x_0\right)  \\
&=& \frac{i \pi^{\frac{n+1}{2}}}{\Gamma \left( \frac{n+1}{2} \right)} \sum_{j=1}^n e_j \frac{\partial}{\partial \xi_j} \left( \int_{\mathbb{R}}e^{-ix_0 \xi_0}|s-x_0|^{-1} e^{-| \underline{\xi}||s-x_0|}d x_0\right)  .
\end{eqnarray*}
We put $s+y=x_0.$
$$ \widehat{\mathcal{F}_{n,3}}(s, \xi) =\frac{i \pi^{\frac{n+1}{2}}e^{-is \xi_0}}{\Gamma \left( \frac{n+1}{2} \right)} \sum_{j=1}^n e_j \frac{\partial}{\partial \xi_j} \left( \int_{\mathbb{R}}e^{-i y \xi_0}|y|^{-1} e^{-| \underline{\xi}||y|} d y\right)  .$$
From \eqref{inte2} we know that
$$ \int_{\mathbb{R}}e^{-i y \xi_0}|y|^{-1} e^{-| \underline{\xi}||y|} dy=-\log(\xi_0^2+| \underline{\xi}|^2). $$
Therefore
\begin{eqnarray*}
\widehat{\mathcal{F}_{n,3}}(s, \xi) &=&-\frac{i \pi^{\frac{n+1}{2}}e^{-is \xi_0}}{\Gamma \left( \frac{n+1}{2} \right)} \sum_{j=1}^n e_j \frac{\partial}{\partial\xi_j} \left( \log(| \underline{\xi}|^2+\xi_0^2) \right)
\\
&=&
- \frac{2i \pi^{\frac{n+1}{2}}e^{-is \xi_0}}{\Gamma \left( \frac{n+1}{2} \right)(\xi_0^2+| \underline{\xi}|^2)} \sum_{j=1}^n e_j \xi_j\\
&=& - \frac{2i \pi^{\frac{n+1}{2}}}{\Gamma \left( \frac{n+1}{2} \right)(\xi_0^2+| \underline{\xi}|^2)}  \, \, \underline{\xi}e^{-is \xi_0}.
\end{eqnarray*}
Finally, multiplying by $\gamma_n:= (-1)^{\frac{n-1}{2}} 2^{n-1} \left[\Gamma\left( \frac{n+1}{2} \right) \right]^2$ we have
\begin{equation}
\label{CR5}
\gamma_n \widehat{\mathcal{F}_{n,3}}(s, \xi)=- \frac{i(-1)^{\frac{n-1}{2}} 2^n \pi^{\frac{n+1}{2}}\Gamma \left( \frac{n+1}{2} \right)}{\xi_0^2+| \underline{\xi}|^2}  \, \, \underline{\xi}e^{-is \xi_0}.
\end{equation}
Putting together \eqref{CR4} and \eqref{CR5} we get
$$\widehat{\mathcal{F}_n}(s,\xi)=k_n \frac{\bar{\xi}}{\xi^2+|\underline{\xi}|^2} e^{-is \xi_0},$$
where $k_n:=i(-1)^{\frac{n-1}{2}}2^n  \pi^{\frac{n+1}{2}} \Gamma \left( \frac{n+1}{2} \right)$.
Finally, the slice monogenic extensions
are obtained reasoning as in the case of the Cauchy kernel.
\end{proof}

\section{The relation of the kernels $S^{-1}$ and $F_n$ via the Fourier transform}
\color{black}{Before to prove a fundamental result we recall that when $n$ is even the operator $\Delta^{\frac{n-1}{2}}$ is defined by the Fourier multipliers
\begin{eqnarray}
\label{fract}
\Delta^{\frac{n-1}{2}} f(x)= \mathcal{R}[(i | \xi|)^{n-1} \mathrm{F}(f(x))(\xi)](x),
\end{eqnarray}
where $ \mathrm{F}$ and $ \mathcal{R}$, are respectively, the Fourier and the inverse Fourier transformations, given, respectively in Definition \ref{Fourier} and Definition \ref{Fourier2}.}

\begin{theorem}
\label{TCR3}
For $x \in \mathbb{R}^{n+1}$ and $s\in \mathbb{R}$ we have that
\begin{equation}
\label{CR6}
\Delta^{\frac{n-1}{2}} S^{-1}(s,x)= \gamma_n (s- \bar{x})(s^2-2x_0s+ |x|^2)^{- \frac{n+1}{2}}
\end{equation}
\end{theorem}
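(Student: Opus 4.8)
The plan is to recognize that the right-hand side of \eqref{CR6} is precisely the kernel $\mathcal F_n(s,x)$, so the statement asserts $\Delta^{\frac{n-1}{2}}S^{-1}(s,x)=\mathcal F_n(s,x)$ for real $s$. Since for $n$ even $\Delta^{\frac{n-1}{2}}$ is not a differential operator but the Fourier multiplier operator \eqref{fract}, the natural route is to verify the identity on the Fourier side: compute the Fourier transform in $x$ of both sides and invoke injectivity of the Fourier transform on tempered distributions. Both Fourier transforms are already available --- $\mathrm F[S^{-1}(s,\cdot)]$ from Theorem \ref{TCR1} and $\mathrm F[\mathcal F_n(s,\cdot)]$ from Theorem \ref{TCR2} --- so the argument is essentially an algebraic comparison of multipliers.

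Concretely, I would proceed as follows. First, observe that for $s=s_0\in\mathbb R$ the left and right kernels coincide, $S^{-1}(s,x)=\dfrac{s-x_0+\underline x}{(s-x_0)^2+|\underline x|^2}$ and $\mathcal F_n(s,x)=\gamma_n(s-\bar x)\big((s-x_0)^2+|\underline x|^2\big)^{-\frac{n+1}{2}}$; near the (measure-zero) singular set $\{x_0=s,\ \underline x=0\}$ these behave like $|x-s|^{-1}$ and $|x-s|^{-n}$ respectively, hence are locally integrable on $\mathbb R^{n+1}$ and slowly growing, so they define tempered distributions to which Theorems \ref{TCR1}--\ref{TCR2} apply. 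Next, apply \eqref{fract}: $\mathrm F[\Delta^{\frac{n-1}{2}}S^{-1}(s,\cdot)](\xi)=(i|\xi|)^{n-1}\mathrm F[S^{-1}(s,\cdot)](\xi)$ with $|\xi|^2=\xi_0^2+|\underline\xi|^2$, and substitute $\mathrm F[S^{-1}(s,\cdot)](\xi)=c_n\,\bar\xi\,(\xi_0^2+|\underline\xi|^2)^{-\frac{n+1}{2}}e^{-is\xi_0}$. Writing $(i|\xi|)^{n-1}=i^{n-1}(\xi_0^2+|\underline\xi|^2)^{\frac{n-1}{2}}$, the powers of $\xi_0^2+|\underline\xi|^2$ telescope from $-\frac{n+1}{2}$ to $-1$, giving $i^{n-1}c_n\,\bar\xi(\xi_0^2+|\underline\xi|^2)^{-1}e^{-is\xi_0}$. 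Comparing with $\mathrm F[\mathcal F_n(s,\cdot)](\xi)=k_n\,\bar\xi(\xi_0^2+|\underline\xi|^2)^{-1}e^{-is\xi_0}$ from Theorem \ref{TCR2}, it remains only to check $i^{n-1}c_n=k_n$; since $c_n=i2^n\pi^{\frac{n+1}{2}}\Gamma(\tfrac{n+1}{2})$ and $k_n=i(-1)^{\frac{n-1}{2}}2^n\pi^{\frac{n+1}{2}}\Gamma(\tfrac{n+1}{2})$, this amounts to $i^{n-1}=(-1)^{\frac{n-1}{2}}$, which holds for $n$ odd since $i^{n-1}=(i^2)^{\frac{n-1}{2}}$ and for $n$ even under the interpretation $(-1)^{\frac{n-1}{2}}=e^{i\pi(n-1)/2}=i^{n-1}$ already adopted in the definitions of $\gamma_n$ and $k_n$. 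Injectivity of the Fourier transform on $\mathcal S'(\mathbb R^{n+1})$ then yields \eqref{CR6}.

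The computation is short, so the points that need care are foundational rather than computational. One must make sure the distributional multiplication is legitimate: the singularity of $\mathrm F[S^{-1}(s,\cdot)]$ at $\xi=0$ is of order $|\xi|^{-n}$, integrable on $\mathbb R^{n+1}$, and multiplication by the continuous weight $(\xi_0^2+|\underline\xi|^2)^{\frac{n-1}{2}}$ keeps it locally integrable (of order $|\xi|^{-1}$), so no regularization is needed and \eqref{fract} applies directly; this is the step I expect to demand the most attention. The second, purely bookkeeping, point is that the sign $(-1)^{\frac{n-1}{2}}$ must be read consistently as $i^{n-1}$ in $\gamma_n$, in $k_n$ and in the multiplier when $n$ is even. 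Finally, for $n$ odd the identity reduces to Definition \ref{Fnker} (equivalently Theorem \ref{Laplacian_comp}), so the genuinely new content is that the same closed form persists when $\Delta^{\frac{n-1}{2}}$ is the fractional, nonlocal operator.
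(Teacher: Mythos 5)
Your proposal is correct and follows essentially the same route as the paper: both verify the identity on the Fourier side using the transforms from Theorems \ref{TCR1} and \ref{TCR2} together with the multiplier definition \eqref{fract} of $\Delta^{\frac{n-1}{2}}$, with the constants matching because $i^{n-1}$ equals $(-1)^{\frac{n-1}{2}}$ under the interpretation built into $\gamma_n$ and $k_n$. The only cosmetic difference is that the paper runs the comparison in weak form, applying Plancherel's theorem to pass the fractional Laplacian onto a Schwartz test function, whereas you multiply the symbol directly against $\mathrm{F}[S^{-1}(s,\cdot)]$ and conclude by injectivity of the Fourier transform on tempered distributions.
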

\begin{proof}
If $n$ is odd, this can be proved through pointwise differential computation, see \cite[Thm. 3.3]{CoSaSo}. While for the case $n$ even the result will be showed for any $ \varphi \in \mathcal{S}(\mathbb{R}^{n+1})$.
Firstly we prove the equality for $s \in \mathbb{R}$. The formula \eqref{new3} and Theorem \ref{Plan} imply that we can pass the factional Laplacian to the test function, so we have
$$ \int_{\mathbb{R}^{n+1}} \Delta^{\frac{n-1}{2}} S^{-1}(s, x) \overline{ \varphi(x)} dx = \int_{\mathbb{R}^{n+1}} S^{-1}(s, x) \overline{ \Delta^{\frac{n-1}{2}} \varphi(x)} dx.$$
Using another time Theorem \ref{Plan} we get
$$
\int_{\mathbb{R}^{n+1}} \Delta^{\frac{n-1}{2}} S^{-1}(s, x) \overline{ \varphi(x)} dx=  \frac{1}{(2 \pi)^{n+1}} \int_{\mathbb{R}^{n+1}} \mathrm{F} \left(S^{-1}(s, .) \right) (\xi) \overline{\mathrm{F} \left(\Delta^{\frac{n-1}{2}} \varphi(x) \right)(\xi)} d \xi
$$
From Theorem \ref{TCR1} and Theorem \ref{TCR2} we obtain
\[
\begin{split}
\int_{\mathbb{R}^{n+1}} &\Delta^{\frac{n-1}{2}} S^{-1}(s, x) \overline{ \varphi(x)} dx
\\
&= \frac{1}{(2 \pi)^{n+1}}  i(-1)^{\frac{n-1}{2}} 2^n \pi^{\frac{n+1}{2}} \Gamma \left( \frac{n+1}{2} \right) \int_{\mathbb{R}^{n+1}} \frac{\bar{\xi} e^{-is \xi_0}}{(\xi_0^2+| \underline{\xi}|^2)^{\frac{n+1}{2}}} | \xi |^{n-1} \overline{\hat{\varphi}(\xi)} d \xi
\\
&= \frac{1}{(2 \pi)^{n+1}}  i(-1)^{\frac{n-1}{2}} 2^n \pi^{\frac{n+1}{2}} \Gamma \left( \frac{n+1}{2} \right) \int_{\mathbb{R}^{n+1}} \frac{\bar{\xi} e^{-is \xi_0}}{\xi_0^2+| \underline{\xi}|^2} \overline{\hat{\varphi}(\xi)} d \xi\\
&= \frac{1}{(2 \pi)^{n+1}} \int_{\mathbb{R}^{n+1}} \widehat{\mathcal{F}_n}(s, \xi) \overline{\hat{\varphi}(\xi)} d \xi.
\end{split}
\]
Finally by applying another time the Theorem \ref{Plan} we get
$$ \int_{\mathbb{R}^{n+1}} \Delta^{\frac{n-1}{2}} S^{-1}(s, x) \overline{ \varphi(x)} dx=\int_{\mathbb{R}^{n+1}} \mathcal{F}_n(s, x) \overline{\varphi(x)} dx.$$

\end{proof}
\begin{corollary}
The relation \eqref{CR6} extends to $s \in \mathbb{R}^{n+1}$, considering the left and the right slice monogenic extensions.
\end{corollary}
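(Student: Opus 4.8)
The plan is to reduce the paravector case to the scalar case of Theorem~\ref{TCR3} by an identity-principle argument in the variable $s$. The aim is to show that for every $x\in\mathbb{R}^{n+1}$ and every $s=s_0+\underline{s}\in\mathbb{R}^{n+1}$ with $s\notin[x]$ one has $\Delta^{\frac{n-1}{2}}S_L^{-1}(s,x)=\mathcal{F}^L_n(s,x)$ and $\Delta^{\frac{n-1}{2}}S_R^{-1}(s,x)=\mathcal{F}^R_n(s,x)$, where $\Delta^{\frac{n-1}{2}}$ acts on $x$ and, for $n$ even, is the Fourier multiplier \eqref{fract}. Fix $x$ and consider the left identity; the right one is obtained in the same way, interchanging the sides on which the Clifford-valued factors appear. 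The set $\{s\in\mathbb{R}^{n+1}:s\notin[x]\}$ is axially symmetric and open, and for every $I\in\mathbb{S}$ its slice in $\cc_I$ is $\cc_I$ minus at most two points, hence connected and containing a real interval. So it suffices to prove that, for this fixed $x$, both sides of the left identity are right slice monogenic functions of $s$ there: since they already coincide on $\mathbb{R}\setminus[x]$ by Theorem~\ref{TCR3}, the identity theorem for holomorphic functions applied on each $\cc_I$ then forces them to coincide on the whole set.

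That the right-hand side is right slice monogenic in $s$ is precisely the extension adopted in Definition~\ref{Fnker} and in the discussion preceding Theorem~\ref{TCR2}: writing $\mathcal{F}^L_n(s,x)=\gamma_n(s-\bar x)(s^2-2\Re(x)s+|x|^2)^{-\frac{n+1}{2}}$ via \eqref{FP}, the factor $s\mapsto(s^2-2\Re(x)s+|x|^2)^{-\frac{n+1}{2}}$ is intrinsic slice monogenic in $s$ (its $(u,v)$-components are real), and its product with the right slice monogenic factor $s-\bar x$, which is affine in $s$, is again right slice monogenic; the monogenicity in $x$ secured by Theorem~\ref{sceqianexponen} for the exponent $\tfrac{n+1}{2}$ plays no role at this point. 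For the left-hand side, if $n$ is odd then $\Delta^{\frac{n-1}{2}}$ is a constant-coefficient differential operator in $x$, hence it commutes with the slice Cauchy-Riemann operator in $s$, and since $S_L^{-1}(s,\cdot)$ is right slice monogenic in $s$ by Proposition~\ref{Laplacian_smonogenic}, so is $\Delta^{\frac{n-1}{2}}S_L^{-1}(s,x)$.

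The case $n$ even is the main obstacle, since there $\Delta^{\frac{n-1}{2}}$ is nonlocal and one cannot commute differential operators. The remedy is to argue on the Fourier side in $x$: by Theorem~\ref{TCR1} the Fourier transform $\mathrm{F}[S_L^{-1}(s,\cdot)](\xi)=c_n\,\bar\xi\,(\xi_0^2+|\underline\xi|^2)^{-\frac{n+1}{2}}e^{-is\xi_0}$ depends on $s$ only through the entire intrinsic slice monogenic factor $e^{-is\xi_0}$, placed on the right, while the multiplier $(i|\xi|)^{n-1}$ and the inverse transform $\mathcal{R}$ in \eqref{fract} involve only $\xi$ and hence commute with right multiplication by that factor. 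Consequently $\Delta^{\frac{n-1}{2}}S_L^{-1}(s,x)$ equals $\mathcal{R}$ applied in $x$ to $(i|\xi|)^{n-1}c_n\,\bar\xi\,(\xi_0^2+|\underline\xi|^2)^{-\frac{n+1}{2}}e^{-is\xi_0}$, which is again right slice monogenic in $s$ for the same reason; this gives the required slice monogenicity of the left-hand side for every $n$, and the argument of the first paragraph closes the proof of the left identity. The right identity follows symmetrically, now using that $S_R^{-1}(s,x)$ in form~II is left slice monogenic in $s$ and that $e^{-is\xi_0}$ appears on the left, and the slice monogenic extensions so produced are exactly the formulas \eqref{new1}--\eqref{new2} and their $\mathcal{F}_n$-analogues. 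One point to handle with care is that $e^{-is\xi_0}$ is unbounded in $s$, so for paravector $s$ these Fourier transforms must be understood as the slice monogenic extensions in $s$ of the genuine (real-$s_0$) transforms of Theorem~\ref{TCR1} and Theorem~\ref{TCR2}; this is exactly why routing the proof through the identity principle, rather than through a literal repetition of the computation in Theorem~\ref{TCR3}, is the clean approach.
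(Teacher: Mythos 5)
Your argument is correct and follows essentially the same route as the paper: the paper also proves the corollary by the identity principle in the variable $s$, resting on the fact that the $s$-dependence enters only through the intrinsic slice monogenic factor $e^{-is\xi_0}$, so that both sides of \eqref{CR6} are slice monogenic in $s$ and agree on real $s$ by Theorem~\ref{TCR3}. Your write-up simply makes explicit the verifications (connected slices of $\{s\notin[x]\}$, slice monogenicity of each side, the Fourier-multiplier commutation for $n$ even) that the paper leaves implicit.
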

\begin{proof}
The extension of the equation \eqref{CR6} to $s \in \mathbb{R}^{n+1}$ follows from the Identity principle, because the function $e^{-is \xi_0}$ is trivially intrinsic slice monogenic.
\end{proof}

We conclude with some remarks.

\begin{remark}
Both classes of hyperholomorphic functions have a Cauchy formula that can be used to define
functions of quaternionic operators or of $n$-tuples of
operators that do not necessarily commute.
\end{remark}

\begin{remark}
The Cauchy formula of slice hyperholomorphic functions generates the $S$-functional calculus for quaternionic linear operators
or for $n$-tuples of not necessarily commuting operators, this calculus is based on the notion of
$S$-spectrum. The spectral theorem for quaternionic operators is also based on the $S$-spectrum.
The $S$-spectrum is used in quaternionic and in Clifford operators theory, see \cite{SCHURBOOK,FRACTBOOK,6CKG,NONCOMMBOOK}.
\end{remark}

\begin{remark}
The Cauchy formula of monogenic functions generates the monogenic functional calculus that is based on the monogenic spectrum.
For monogenic operator theory and related topics see \cite{jefferies,6MQ,mitreabook,booktao,6qian1} and the references therein.
The $F$-functional calculus is a bridge between the spectral theory on the $S$-spectrum and the monogenic spectral theory and it is studied in \cite{FCAL1,FCAL2,CoSaSo}.
\end{remark}


\begin{thebibliography}{211}


\bibitem{SCHURBOOK} D. {Alpay}, F. {Colombo},  I. {Sabadini},
 {\em Slice Hyperholomorphic Schur Analysis},
 Operator Theory: Advances and Applications, 256. Birkh\"auser/Springer, Cham, 2016. xii+362 pp.

\bibitem{bds} F. Brackx, R. Delanghe, F. Sommen, {\em Clifford Analysis}, Pitman Res. Notes in Math., 76, 1982.


\bibitem{FCAL1}
 F. Colombo, J. Gantner, {\em Formulations of the F-functional calculus and some consequences}, Proc. Roy. Soc. Edinburgh Sect. A, {\bf 146} (2016),  509--545.

\bibitem{FRACTBOOK}
F. Colombo, J. Gantner,
{\em Quaternionic closed operators, fractional powers and fractional diffusion processes},
 Operator Theory: Advances and Applications, 274. Birkh\"auser/Springer, Cham, 2019. viii+322 pp.



\bibitem{6CKG}
F. Colombo, J. Gantner, D. P. Kimsey,
{\em Spectral theory on the S-spectrum for quaternionic operators},
 Operator Theory: Advances and Applications, 270. Birkh\"auser/Springer, Cham, 2018. ix+356 pp.

\bibitem{radon}
 F. Colombo, R. Lavicka, I. Sabadini,V. Soucek,
 {\em The Radon transform between monogenic and generalized slice monogenic functions}, Math. Ann., {\bf 363} (2015),  733--752.




\bibitem{GLOBAL}
F. Colombo, J.O. Gonzalez-Cervantes, I. Sabadini,
{\em  A nonconstant coefficients differential operator associated to slice monogenic functions}, Trans. Amer. Math. Soc., {\bf 365} (2013), 303--318.


\bibitem{SOREN1}
   F. Colombo, R. S. Krausshar, I. Sabadini,
     {\em Symmetries of slice monogenic functions},
      J. Noncommut. Geom., {\bf 14} (2020), 1075--1106.


\bibitem{A}
 F. Colombo,  D. Pena Pena, I. Sabadini, F. Sommen,
{\em A new integral formula for the inverse Fueter mapping theorem},
  J. Math. Anal. Appl., { \bf 417} (2014),  112--122.

\bibitem{cstrends} F. Colombo, I. Sabadini, {\em A structure formula for  slice monogenic functions and some of its consequences}, Hypercomplex Analysis, Trends in Mathematics, Birkh\"auser, 2009, 69--99.

\bibitem{FCAL2}
 F. Colombo,  I. Sabadini,{\em The F-functional calculus for unbounded operators},
  J. Geom. Phys., {\bf 86} (2014), 392--407.


\bibitem{CoSaSo1}  F. Colombo, I. Sabadini, F. Sommen,
{\em The inverse Fueter mapping theorem},
Commun. Pure Appl. Anal., {\bf 10} (2011), 1165--1181.



\bibitem{CoSaSo2} F. Colombo, I. Sabadini, F. Sommen,
{\em The inverse Fueter mapping theorem in integral form using spherical monogenics},
 Israel J. Math., {\bf 194} (2013),  485--505.

\bibitem{C}
F. Colombo, I. Sabadini, F. Sommen,
{ \em The Fueter primitive of biaxially monogenic functions},
Commun. Pure Appl. Anal., {\bf 13} (2014), 657--672.



\bibitem{BLU}
 F. Colombo, I. Sabadini, F. Sommen, D.C. Struppa,
  {\em Analysis of Dirac Systems and Computational Algebra},
Progress in Mathematical Physics, Vol. 39, {Birkh\"auser}, Boston,
2004.


\bibitem{CoSaSo}  F. Colombo, I. Sabadini, F. Sommen,
{\em The Fueter mapping theorem in integral form and  the $\mathcal{F}$-functional calculus},
 Math. Methods Appl. Sci., {\bf 33} (2010), 2050-2066.

	
\bibitem{Entirebook}
 F. Colombo, I. Sabadini, D.C. Struppa,
{\em Entire Slice Regular Functions},
 SpringerBriefs in Mathematics. Springer, Cham, 2016, v+118.

\bibitem{CSSf}
        F. Colombo,  I. Sabadini, D.C. Struppa,
       {\em Slice monogenic functions},
        Israel J. Math., {\bf 171} (2009), 385--403.


\bibitem{CSSd}
  F. Colombo,  I. Sabadini, D.C. Struppa,
 {\em An extension theorem for slice monogenic functions and some of its consequences},
  Israel J. Math., {\bf 177} (2010), 369--389.

\bibitem{CSSe}
   F. Colombo,  I. Sabadini, D.C. Struppa,
   {\em Duality theorems for slice hyperholomorphic functions},
   J. Reine Angew. Math., {\bf 645} (2010), 85--105.

\bibitem{NONCOMMBOOK} F. Colombo, I. Sabadini, D.C. Struppa, {\em Noncommutative functional calculus. Theory and applications of slice hyperholomorphic functions},
     Progress in Mathematics, 289. Birkh\"auser/Springer Basel AG, Basel, 2011. vi+221 pp.


\bibitem{SCEBOOK} F. Colombo, I. Sabadini, D.C. Struppa,
 {\em Michele Sce's Works in Hypercomplex Analysis.
A Translation with Commentaries}, Cham: Birkh\"auser (ISBN 978-3-030-50215-7/hbk; 978-3-030-50216-4/ebook). vi, 122 p. (2020)

 \bibitem{Cnudde}
L. Cnudde,  H. De Bie,  G. Ren,
{\em Algebraic approach to slice monogenic functions},
Complex Anal. Oper. Theory, {\bf 9} (2015),  1065--1087.


\bibitem{DELSOSOU}
 R. Delanghe, F. Sommen,  V. Soucek,
 {\em Clifford algebra and spinor-valued functions. A function theory for the Dirac operator},
  Related REDUCE software by F. Brackx and D. Constales.
  With 1 IBM--PC floppy disk (3.5 inch). Mathematics and its Applications, 53. Kluwer Academic Publishers Group, Dordrecht, 1992. xviii+485 pp.


\bibitem{CCC}
B. Dong, K. I. Kou, T. Qian, I Sabadini,
{\em The inverse Fueter mapping theorem for axially monogenic functions of degree k},
 J. Math. Anal. Appl., {\bf 476} (2019), 819--835.


\bibitem{D}
B. Dong, K.I. Kou, T. Qian, I. Sabadini,
{\em  On the inversion of Fueter's theorem},
  J. Geom. Phys., {\bf 108} (2016), 102--116.


\bibitem{E1}
D. Eelbode, S. Hohloch, G. Muarem,
{\em The symplectic Fueter-Sce theorem},
 Adv. Appl. Clifford Algebr., {\bf 30} (2020), no. 4, Paper No. 49, 19 pp.

\bibitem{E2}
D. Eelbode, T. Janssens, {\em  Higher spin generalisation of Fueter's theorem},
 Math. Methods Appl. Sci., {\bf 41} (2018), no. 13, 4887--4905.

\bibitem{E3}
D. Eelbode, {\em The biaxial Fueter theorem},
 Israel J. Math., {\bf 201} (2014), 233--245.


\bibitem{fueter1} R. Fueter, {\em Die Funktionentheorie der
Differentialgleichungen $\Delta u = 0$ und $\Delta\Delta u = 0$ mit vier reellen Variablen},  {Comm. Math. Helv.}, {\bf 7} (1934), 307--330.







\bibitem{gimu} J. E. Gilbert, M. A. M. Murray, {\em Clifford algebras and Dirac operators in harmonic analysis,}
Cambridge studies in advanced mathematics n. 26 (1991).



 \bibitem{Gurlebeck:2008}
K. G\"urlebeck, K. Habetha, and W. Spr\"o\ss ig,
\newblock {\em Holomorphic functions in the plane and {$n$}-dimensional space}.
\newblock Birkh\"auser Verlag, Basel, 2008.
\newblock Translated from the 2006 German original.

\bibitem{GR} I.S.~Gradshteyn, I.M.~ Ryzhik, \emph{Table of integrals, Series and Products}, Academic Press LTD, Mathematics/ Engineering, Seventh Edition, 2007.

\bibitem{G} L.~Grafakos, \emph{Classical Fourier Analysis}, Second Edition, Graduate Texts in Math, no. 249, Springer, New York, 2008.

 \bibitem{jefferies} B. Jefferies, {\em Spectral properties of noncommuting operators},
Lecture Notes in Mathematics, 1843, Springer-Verlag, Berlin, 2004.


\bibitem{kqs} K. I. Kou, T. Qian, F. Sommen, {\em Generalizations of Fueter's theorem}, Meth. Appl. Anal., {\bf 9}
(2002), 273--290.

\bibitem{6MQ}
C. Li, A. McIntosh, T. Qian,  {\em Clifford algebras, Fourier transforms and singular convolution operators on Lipschitz surfaces}, Rev. Mat. Iberoamericana, {\bf 10} (1994), 665--721.

\bibitem{mitreabook}
   M. Mitrea,{\em  Clifford wavelets, singular integrals, and Hardy spaces},
    Lecture Notes in Mathematics, 1575. Springer-Verlag, Berlin, 1994. xii+116 pp.

    \bibitem{O} F.~Oberhettinger, \emph{Tables of Fourier transform ad Fourier transform of distributions}, Springer-Verlag, Berlin, 1990.

\bibitem{O1} N.~ Ormerod, \emph{A Theorem on Fourier Transforms of Radial Functions}, J. Math. Anal. Appl {\bf 69} (1979),559-562.

\color{black}{
\bibitem{P1}
  D. Pena Pena, I. Sabadini, F. Sommen,
  {\em Fueter's theorem for monogenic functions in biaxial symmetric domains}, Results Math., {\bf 72} (2017), 1747--1758.


\bibitem{P2} D. Pena-Pena, T. Qian, F. Sommen, {\em An alternative proof of
Fueter's theorem},
Complex Var. Elliptic Equ., {\bf 51} (2006),  913--922.

\bibitem{qian} T. Qian, {\em Generalization of Fueter's result to $\rr^{n+1}$}, Rend. Mat. Acc. Lincei, {\bf 8} (1997), 111--117.

   \bibitem{booktao}
   T. Qian, P. Li,
   {\em  Singular integrals and Fourier theory on Lipschitz boundaries},
 Science Press Beijing, Beijing; Springer, Singapore, 2019. xv+306 pp.

 \bibitem{6qian1}    T. Qian, {\em Singular integrals on star-shaped Lipschitz surfaces in the quaternionic space}, Math. Ann., {\bf 310} (1998), 601--630.

\bibitem{REN4}
G. Ren, X. Wang, {\em Growth and distortion theorems for slice monogenic functions},
 Pacific J. Math., {\bf 290} (2017), 169--198.



\bibitem{sce} M. Sce, {\em Osservazioni sulle serie di potenze nei moduli quadratici}, Atti Acc. Lincei Rend. Fisica, {\bf 23} (1957), 220--225.


  \bibitem{sommen1} F. Sommen, {\em On a generalization of Fueter's theorem}, Zeit. Anal. Anwen., {\bf 19} (2000),
899-902.
}




\end{thebibliography}
\end{document}